\date{\today}
\newtheorem{theorem}{Theorem}
\newtheorem{lemma}{Lemma}
\newtheorem{proposition}{Proposition}
\newcommand{\R}{\mathbb{R}}
\newcommand{\N}{\mathbb{N}}
\newcommand{\St}{\mathbb{S}^2}
\newcommand{\BMO}{\mathrm{BMO}}
\newcommand{\scp}[2]{\left\langle #1, #2 \right\rangle}
\newcommand{\eps}{\varepsilon}
\newcommand{\del}{\partial}
\newcommand{\bs}[1]{ \boldsymbol{#1}}
\newcommand{\hl}{(-\Delta)^{\frac 1 2}}
\newcommand{\hlf}{(-\Delta)^{\frac 1 4}}
\newcommand{\gl}{(-\Delta)}
\def\Xint#1{\mathchoice
{\XXint\displaystyle\textstyle{#1}}%
{\XXint\textstyle\scriptstyle{#1}}%
{\XXint\scriptstyle\scriptscriptstyle{#1}}%
{\XXint\scriptscriptstyle\scriptscriptstyle{#1}}%
\!\int}
\def\XXint#1#2#3{{\setbox0=\hbox{$#1{#2#3}{\int}$}
\vcenter{\hbox{$#2#3$}}\kern-.5\wd0}}
\def\dashint{\Xint-}
\title[Global dissipative half-harmonic flows into spheres]{Global dissipative half-harmonic flows into spheres: small data in critical Sobolev spaces}
\author{Christof Melcher}
\address{RWTH Aachen\\Lehrstuhl I f\"ur Mathematik\\Pontdriesch 14-16\\52056 Aachen}
\address{JARA -- Fundamentals of Future Information Technology}
\email{melcher@rwth-aachen.de}
\author{Zisis N. Sakellaris}
\address{RWTH Aachen\\Lehrstuhl I f\"ur Mathematik\\Pontdriesch 14-16\\52056 Aachen}
\email{sakellaris@math1.rwth-aachen.de}
\subjclass[2000]{35D35, 35R11, 35B40}
\keywords{Well-posedness, Landau-Lifshitz equations, half-harmonic flows}
\begin{document}

\begin{abstract} We establish global existence, uniqueness, regularity and long-time asymptotics of strong solutions
to the half-harmonic heat flow and dissipative Landau-Lifshitz equation, valid for initial data that is small in the homogeneous Sobolev norm
$\dot{H}^{\frac n 2}$ for space dimensions $n \le 3$.

\end{abstract}

\maketitle

\section{Introduction}

For suitable maps $\bs u:\R^n \times \R^+ \rightarrow{\St}\subset{\R^{3}}$ we consider the following fractional version of the dissipative Landau-Lifshitz or Landau-Lifshitz-Gilbert equation
\begin{equation} \label{eq:HLLG}
\del_t \bs u = \bs u \times \hl \bs u + \lambda \bs u \times \bs u \times \hl \bs u
\end{equation}
where $\lambda>0$ is a fixed damping parameter and $\times$ is the vector product in $\R^3$. Compared to conventional 
Landau-Lifshitz equations based on the Dirichlet energy, the geometric Laplacian $-\Delta$
is replaced by the so-called half-Laplacian $\hl$.  It is customary to define  fractional Laplacians $(-\Delta)^{s}$ in Euclidean space via Fourier transform $\mathcal{F}$ by 
$\mathcal{F}\left( (-\Delta)^{s } f \right)(\xi)=|\xi|^{2s} \mathcal{F}(f)(\xi)$.
The half-Laplacian is a nonlocal first order elliptic operator and factorizes as $\hl=\mathcal{R} \cdot \nabla$
where $\mathcal{R}$ is the vectorial Riesz transform acting as a Calder\'on-Zygmund operator. 
In the one-dimensional case, $\mathcal{R}$ becomes the Hilbert transform $\mathcal{H}$.
This factorization proves particularly convenient in combination
with geometric nonlinearities. The half-Laplacian of a field $\bs u$ arises as the $L^2$ gradient of the governing energy
\begin{equation} \label{eq:Henergy} 
E(\bs u)= \frac{1}{2} \| \bs u \|_{\dot{H}^{\frac 1 2}}^2
\end{equation} 
where homogeneous Sobolev norms are given by $\|f\|_{\dot{H}^s}=\|(-\Delta)^{\frac s 2} f\|_{L^2}$. 
The energy \eqref{eq:Henergy} serves as a Liapunov functional for \eqref{eq:HLLG}.
More precisely, \eqref{eq:HLLG} is a dissipative version of the so-called half-wave map equation
\begin{equation} \label{eq:HWM}
\del_t \bs u = \bs u \times \hl \bs u,
\end{equation}
recently examined in \cite{LenSchi:wave, GerLen}, which formally preserves the energy \eqref{eq:Henergy}. In the language of ferromagnetism, \eqref{eq:HWM} is the conservative Landau-Lifshitz equation of a continuous spin system governed by \eqref{eq:Henergy}. Local and global well-posedness questions for \eqref{eq:HWM} have been addressed in \cite{PuGuo2} and \cite{KriegerSire}. The parabolic structure of \eqref{eq:HLLG} is revealed by interpreting the double vector product 
\[
-\bs u \times \bs u \times \bs \xi= (\bs 1 -\Pi_{\bs u} )\bs \xi
\] 
as orthogonal projection onto the tangent planes $T_{\bs u} \St$ along $\bs u$ where 
\[
\Pi_{\bs u} = \bs u \otimes \bs u.
\]
The overdamped limit $\lambda \to \infty$ yields the half-harmonic map heat flow equation
\begin{equation} \label{eq:HHHF}
\del_t \bs u + \hl \bs u = \Pi_{\bs u} \left[  \hl \bs u \right],
\end{equation}
the $L^2$ gradient flow equation for the governing energy \eqref{eq:Henergy}.
In contrast to the dissipative and conservative Landau-Lifshitz equations \eqref{eq:HLLG} and \eqref{eq:HWM}, the heat flow equation \eqref{eq:HHHF} extends to target spheres $\mathbb{S}^{m} \subset \R^{m+1}$ of arbitrary dimension. A new phenomenon of infinite time blow-up of half-harmonic flows with $m=n=1$ has recently been shown in \cite{Sire2017} to occur. This is in sharp contrast to the finite time blow-up of harmonic map heat flows  \cite{Chang92} for $m=n=2$ and is one motivation of this work.\\
Global existence of a certain class of weak solutions of \eqref{eq:HLLG} and \eqref{eq:HHHF} for finite energy initial data has been shown in \cite{PuGuo1, Schikorra_Sire_Wang} for more general governing energies and target manifolds, respectively. Here we are interested in global strong solvability of \eqref{eq:HLLG} and \eqref{eq:HHHF} under appropriate smallness conditions on the initial data. As a measure of smallness, the parabolic dilation symmetry of the Landau-Lifshitz equation predicts the category of scaling invariant norms, independently of the (fractional) power of the Laplacian $(-\Delta)^{s}$. In the conventional case $s=1$, small energy well-posedness in the energy critical dimension $n=2$ is part of results in \cite{Chen2000, DingGuo2000, Harpes} based on techniques developed in the context of the harmonic map heat flow \cite{Riviere_Diss, Struwe, Freire}.
In the supercritical case $n \ge 3$, global well-posedness  under smallness conditions in terms of scaling critical Sobolev norms can be proven by using methods from semilinear evolution equations \cite{Soyeur}. An analog analysis of the dissipative Landau-Lifshitz equation can be carried out on the basis of a moving frame reformulation \cite{Melcher}, see also \cite{LinLiaWang}. Finite time blow-up and existence of only partially regular solutions are to be expected for finite energy initial data, see e.g. \cite{Chen_Struwe, Coron_Ghidaglia, Ding_Wang:07, Melcher:05}.\\
In the half-harmonic case $s=1/2$, the energy critical dimension according to the dilation symmetry of \eqref{eq:HLLG}
and \eqref{eq:HHHF} is $n=1$. Global well-posedness for $n=1$ is therefore expected for small energy \eqref{eq:Henergy} initial data.  In higher dimensions $n > 1$, scaling critical fractional Sobolev spaces
\[
H^{\frac n 2}_Q(\R^n; \mathbb{S}^{m})=\{ \bs u:\R^n \to \mathbb{S}^{m}: \bs u -Q \in H^{\frac n 2}(\R^n;\R^{m+1})\}
\]
and their homogeneous seminorms
are a natural replacement of the energy space for $n=1$. For simplicity, we shall focus on the physical dimensions $n \le 3$. It is conceivable, however, that the result holds true in arbitrary space dimensions.

\begin{theorem} \label{MainTheorem}
Let $Q\in{\St}$ and $\bs u_{0}\in H^{\frac{n}{2}}_{Q}(\R^n;\St)$ for $1 \le n \le 3$. Then there exists $\delta>0$ with the property that if 
$\|\bs u_0\|_{\dot{H}^{\frac{n}{2}}} <  \delta$, there exists
a unique strong solution 
\[
\bs u  \in C^0\left([0,\infty); H^{\frac{n}{2}}_Q(\R^n;\St)\right)
\]
with 
\[
(\nabla \bs u, \del_t \bs u) \in  L^2\left((0,\infty);H^{\frac{n-1}{2}}\left(\R^n; \R^{3 \times (n+1)}\right)\right)
\]
of \eqref{eq:HLLG} with $\bs u(0)=\bs u_0$. The solution
is locally H\"older continuous in space-time and 
\[
\lim_{t \to \infty} \| \nabla \bs u(t)\|_{\dot{H}^{\frac{n-1}{2}}}=0.
\]
In particular, if $n=2,3$ then $\bs u(t)$ converges uniformly to $Q$ as $t \to \infty$.
\end{theorem}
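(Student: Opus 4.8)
The plan is to recast \eqref{eq:HLLG} as a semilinear evolution equation governed by the generator $-\hl$ and to run a fixed-point argument in a scaling-critical space, following the Fujita--Kato / Koch--Tataru philosophy adapted to the half-Laplacian. Writing $\bs u = Q + \bs v$, equation \eqref{eq:HLLG} becomes
\begin{equation} \label{eq:semilinear-plan}
\del_t \bs v + \lambda \hl \bs v = \bs N(\bs v) := \bs u \times \hl \bs u + \lambda\left(\bs u \times \bs u \times \hl \bs u + \hl \bs v\right),
\end{equation}
where, crucially, every term on the right is at least quadratic in $\bs v$ after using $|\bs u|\equiv 1$ and the factorization $\hl = \mathcal{R}\cdot\nabla$; this last point is what makes the nonlinearity amenable to the $L^2$-type Strichartz/maximal-regularity estimates for the fractional heat semigroup $e^{-t\hl}$. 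I would first set up the solution space $X$ as the intersection of $C^0\big([0,\infty);\dot H^{n/2}\big)$ with the maximal-regularity space $\{\bs v : (\nabla\bs v,\del_t\bs v)\in L^2((0,\infty);\dot H^{(n-1)/2})\}$, endowed with its natural norm, and establish the linear estimate: $\|e^{-t\hl}\bs v_0\|_X \lesssim \|\bs v_0\|_{\dot H^{n/2}}$ together with the Duhamel bound $\big\|\int_0^t e^{-(t-s)\hl}\bs F(s)\,ds\big\|_X \lesssim \|\bs F\|_{L^2(\dot H^{(n-1)/2})'}$, both of which follow from the Mikhlin multiplier theorem and the smoothing properties of $|\xi|$-type symbols.

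The core of the argument is then the nonlinear estimate $\|\bs N(\bs v) - \bs N(\bs w)\|_{L^2(\dot H^{(n-1)/2})'} \lesssim \big(\|\bs v\|_X + \|\bs w\|_X\big)\|\bs v - \bs w\|_X$, valid in the regime $n\le 3$. To prove it I would decompose $\bs N$ into trilinear and bilinear pieces, expand $\Pi_{\bs u} = Q\otimes Q + Q\otimes\bs v + \bs v\otimes Q + \bs v\otimes\bs v$, and control each product by fractional Leibniz (Kato--Ponce) and the Sobolev embeddings $\dot H^{n/2}\hookrightarrow \BMO$ and $\dot H^{(n-1)/2}$-to-$L^{p}$ interpolation; the endpoint $n/2$ is exactly borderline, so one must be careful to place the ``rough'' factor in $\dot H^{n/2}\cap L^\infty$ and the remaining factors in spaces where the total homogeneity matches $(n-1)/2$. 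Here I would use a Littlewood--Paley / paraproduct decomposition to avoid logarithmic losses at the endpoint and to handle the Riesz transform cleanly (it commutes with the projections, being a convolution operator). Once this estimate is in hand, a standard Banach fixed-point argument in a small ball of $X$ gives existence and uniqueness of a global solution for $\|\bs u_0\|_{\dot H^{n/2}} < \delta$; continuity in time into $\dot H^{n/2}$ and the constraint $|\bs u|\equiv 1$ (the latter via testing the equation against $\bs u$ and a Gr\"onwall argument, using that $\del_t|\bs u|^2$ satisfies a linear parabolic equation) are routine consequences.

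For the qualitative conclusions: local H\"older regularity in space-time follows by bootstrapping — once $\bs v\in X$ the nonlinearity lies in better spaces, so parabolic regularity theory for $\del_t + \lambda\hl$ (e.g. fractional analogues of Schauder/De Giorgi estimates, or simply iterating the Duhamel representation) upgrades $\bs u$ to $C^\alpha_{\loc}$. The decay $\|\nabla\bs u(t)\|_{\dot H^{(n-1)/2}}\to 0$ comes from the fact that $\nabla\bs u\in L^2((0,\infty);\dot H^{(n-1)/2})$ together with a uniform bound on $\del_t\nabla\bs u$ in a weaker norm (from differentiating the equation and re-running the estimates), which forces the $L^2$-integrand to vanish along the flow; alternatively, an energy--dissipation identity for $E(\bs u(t))$ gives $\int_0^\infty \|\del_t\bs u\|_{L^2}^2\,dt <\infty$, and combined with the a priori control this yields the stated limit. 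Finally, for $n=2,3$ the embedding $\dot H^{(n-1)/2}(\R^n)\cap\dot H^{n/2}(\R^n)\hookrightarrow C_0(\R^n)$ (valid since $(n-1)/2 < n/2$ and interpolation crosses $n/2$) promotes the decay of $\|\nabla\bs u\|_{\dot H^{(n-1)/2}}$ to uniform convergence $\bs u(t)\to Q$.

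The step I expect to be the main obstacle is the nonlinear estimate at the exact scaling-critical exponent $s = n/2$: this is the borderline case for the algebra/multiplication property of $\dot H^{n/2}$, so naive Kato--Ponce bounds fail and one genuinely needs the paraproduct structure — together with the observation that the worst term $\bs u\times\hl\bs u$ has a null-type structure (the leading quadratic part $Q\times\hl\bs v$ is linear and must be absorbed or shown to vanish by the constraint) — to close the fixed point. Getting the function spaces for the maximal-regularity piece to be simultaneously scaling-critical, compatible with the $n\le 3$ restriction, and strong enough to yield both uniqueness and the asymptotics is where the real care lies.
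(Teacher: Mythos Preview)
Your approach --- recasting \eqref{eq:HLLG} as a semilinear perturbation of the fractional heat flow and running a Fujita--Kato contraction --- is genuinely different from the paper's. The paper proceeds by \emph{energy methods}: it regularizes by $\eps(-\Delta)^\nu$, derives $\eps$-uniform a priori bounds by testing the equation against multipliers $(-\Delta)^{k/2}\bs u$ (Lemmata~\ref{lemma:H^1}--\ref{lemma:H^{3/2}}, Proposition~\ref{proposition:H^{k/2}}), and passes to the limit via weak compactness; no mild formulation or maximal regularity is ever invoked. All nonlinear control comes from inner-product identities in which the geometric structure (skew-symmetry of $\times$, the constraint $\bs u\cdot\nabla\bs u=0$) is exploited through the commutator representations \eqref{eq:commutator_structure}--\eqref{eq:skew_structure} together with the Coifman--Rochberg--Weiss and Kato--Ponce theorems \eqref{eq:CRW}--\eqref{eq:KatoPonce}.

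Your plan has two concrete gaps that the paper's method sidesteps. First, the linearization about $Q$ is \emph{degenerate} in the normal direction: after absorbing the linear pieces $Q\times\hl\bs v$ and $\lambda(Q\cdot\hl\bs v)Q$ (the latter from the double cross product) into the generator, one obtains $(\lambda\bs 1-\Omega_Q-\lambda\,Q\otimes Q)\hl$, whose eigenvalues are $\lambda\pm i$ on $T_Q\St$ but $0$ along $Q$. The semigroup therefore has no smoothing on the normal component, and a fixed point in your space $X$ cannot close without a tangential (stereographic or moving-frame) reduction, which you do not set up; ``shown to vanish by the constraint'' is not available since iterates in a contraction scheme are not sphere-valued. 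Second, the product estimate underlying $\|\bs v\times\hl\bs v\|_{L^2_t\dot H^{(n-1)/2}_x}\lesssim\|\bs v\|_X^2$, namely $\dot H^{n/2}\times\dot H^{(n-1)/2}\to\dot H^{(n-1)/2}$, \emph{fails} at the critical exponent; paraproducts alone do not rescue it, and the $L^\infty$ bound from $|\bs u|=1$ is of size $O(1)$ rather than $O(\delta)$, hence useless for contraction. The energy method avoids both issues at once: testing against $(-\Delta)^{k/2}\bs u$ keeps the constraint active throughout (so no degenerate direction appears), and the commutator bounds \eqref{eq:CRW}--\eqref{eq:KatoPonce} provide exactly the needed gain when applied \emph{inside an $L^2$ pairing} rather than as a direct norm bound on $\bs N(\bs v)$.
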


In the case of the half-harmonic heat flow equation \eqref{eq:HHHF}, the result holds true for target spheres $\mathbb{S}^{m} \subset \R^{m+1}$ of arbitrary dimension.
The theorem includes in particular an energy threshold for infinite time blow-up as constructed in
\cite{Sire2017} in the case $m=n=1$. In view of the classification results for half-harmonic maps from $\R$ to $\mathbb{S}^1$
in \cite{MillotSire}, one may conjecture that $\delta=\sqrt{2\pi}$, corresponding to the energy \eqref{eq:Henergy} of a half-harmonic map of unit degree (winding number). The regularity theory of finite energy half-harmonic maps is subtle and based on novel commutator estimates \cite{DaLioRiv1, DaLioRiv2, LenSchi}. These estimates are a replacement for the Jacobian structure of the harmonic map equation, leading to the regularity theory in dimension $n=2$, see \cite{Helein:book}.  
In contrast, the proof of Theorem \ref{MainTheorem} rests on classical analytical tools.

In view of the orthogonality relation $\nabla \bs u \cdot \bs u=0$, the nonlinearity coming from the projection $\Pi_{\bs u}$ can be expressed in terms of a tensorial commutator
\begin{equation} \label{eq:commutator_structure}
\bs u \cdot \hl \bs u = - [\mathcal{R},  \bs u] \nabla \bs u
\end{equation}
where more precisely
\[
 [\mathcal{R}, \bs u] \nabla \bs u = \sum_{j,k} [ \mathcal{R}_j, u_k] \del_j u_k
\]
and $[\mathcal{R}_{j}, b] f = \mathcal{R}_j (bf)-b (\mathcal{R}_j f)$ for a scalar Riesz transform $\mathcal{R}_j$.
Taking into account the Poincar\'e type estimate $\|b\|_{\rm BMO} \lesssim \|b\|_{\dot{H}^{\frac{n}{2}}}$, the commutator structure \eqref{eq:commutator_structure} is exploited
by means of the Coifman-Rochberg-Weiss commutator theorem \cite{CRW} 
\begin{equation} \label{eq:CRW}
\| [\mathcal{R}_j ,b ] f \|_{L^p} \lesssim \|b\|_{\rm BMO} \|f\|_{L^p} \quad \text{for} \quad 1<p<\infty,
\end{equation}
where $ \|b\|_{\rm BMO} = \sup_{Q} \dashint_Q |b- \dashint_Q b|$ bounds the mean oscillation of $b$ over finite cubes $Q \subset \R^n$.
A further nonlinear structure arising from the Hamiltonian terms in \eqref{eq:HLLG} is
\begin{equation} \label{eq:skew_structure}
 \bs u \times \hl \bs v = (-\Delta)^{\frac 1 4} \left( \bs u \times (-\Delta)^{\frac 1 4} \bs v \right) - \left[ (-\Delta)^{\frac 1 4}, \Omega_{\bs u}\right] (-\Delta)^{\frac 1 4} \bs v,
\end{equation}
where $\Omega_{\bs u}$ is the matrix field representing the vector multiplication by $\bs u$, i.e.,
\[
\Omega_{\bs u} \bs \xi = \bs u \times \bs \xi
\] 
and $\bs v$ is typically a (fractional) derivative of $\bs u$.  A tensorial version of the homogenenous Kato-Ponce commutator theorem \cite{KatoPonce, Hofmann}  
 \begin{equation} \label{eq:KatoPonce}
\| [(-\Delta)^{\frac 1 4} ,a ] f \|_{L^q} \lesssim \| (-\Delta)^{\frac 1 4} a \|_{L^r} \|f\|_{L^p} \quad \text{for}  \quad \frac 1 q = \frac 1 p +\frac 1 r
\end{equation}
serves as a generalized Leibniz rule for \eqref{eq:skew_structure}. 
Finally fractional versions of the Gagliardo-Nirenberg inequality
serve as a replacement for the Ladyzhenskaya interpolation inequality, crucial in the analysis of harmonic map heat flow and conventional  Landau-Lifshitz-Gilbert equations in two space dimensions. These are presented in Appendix \ref{appendix}, together with all the other interpolation inequalities used in the course of this work.

The construction of a global solution is based on an $\eps$ regularization of the half-Laplacian 
$\eps (-\Delta)^{\nu} + \hl$ with an integer $\nu$ to be chosen appropriately.
With the notation for the normal projection $\Pi_{\bs u}$ and tangential rotation $\Omega_{\bs u}$ introduced beforehand,
the corresponding regularization of \eqref{eq:HLLG} reads
\begin{equation}\label{eq:LLGR}
\del_t \bs u + \lambda   \left( \eps \gl^\nu  + \hl \right ) \bs u   = \left( \lambda \Pi_{\bs u} + \Omega_{\bs u} \right)\left( \eps \gl^\nu  + \hl \right) \bs u. 
\end{equation}
For the nonlinearities on the right, we shall refer to the projection and Hamiltonian terms, respectively. For $\eps>0$ and $\nu \ge \frac{n+1}{2}$ the
regularized equation becomes subcritical in the sense that the governing energy
\begin{equation} \label{eq:Renergy}
E_\eps(\bs u) = \frac 1 2 \left( \eps \|\nabla^{\nu} \bs u\|_{L^2}^2 + \|\bs u\|_{\dot{H}^{\frac{1}{2}}}^2 \right)
\end{equation}
serves to control higher order Sobolev norms. This entails to global existence and regularity by means of standard arguments as in \cite{Zhou_Guo_Shao} while treating fractional terms a lower order perturbations. With such approximate
solutions for regularized initial data at hand, we shall derive, under the condition of uniformly small $H^{\frac{n}{2}}$ seminorms, 
$\eps$-uniform Sobolev estimates that enable us to pass to the limit $\eps \searrow 0$ producing the desired solution. The uniqueness result serves to bootstrap regularity in space and time. In the presentation we shall focus on the dissipative Landau-Lifshitz equation which is more intricate due to the Hamiltonian terms. The dimensionality of the target sphere will play no role in our analysis of the heat flow terms. For the sake of clarity we shall first treat the energy critical case $n=1$
and the cases $n=2,3$, requiring a biharmonic regularization with $\nu=2$, separately.

\section{Uniqueness}

We establish uniqueness of solutions of \eqref{eq:HLLG} in the class
\begin{equation} \label{eq:class}
X_T=\left\{ \bs u \in C^0\left([0,T]; H^{\frac{n}{2}}_Q(\R^n;\St)\right) : \nabla \bs u, \del_t \bs u \in L^2\left((0,T);H^{\frac{n-1}{2}}\left(\R^n \right)\right) \right\}
\end{equation}
for a finite or infinite time horizon $T>0$. Using commutator and fractional Gagliardo-Nirenberg estimates, the common strategy known from harmonic flows, see e.g. \cite{Struwe}, can be adapted.

\begin{proposition}\label{prop:uniqueness}
Let $\bs u,\bs v \in X_T$ be two solutions of \eqref{eq:HLLG}
such that $\bs u(0)=\bs v(0)$, then $\bs u(t)\equiv{\bs v(t)}$ for all $t\in(0,T)$. 
\end{proposition}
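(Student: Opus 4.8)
The plan is to estimate the difference $\bs w = \bs u - \bs v$ of two solutions by a Gronwall argument in the natural energy space. First I would subtract the two copies of \eqref{eq:HLLG} to obtain an evolution equation for $\bs w$ of the schematic form
\[
\del_t \bs w + \lambda \hl \bs w = \lambda \left( \Pi_{\bs u} - \Pi_{\bs v} \right) \hl \bs u + \lambda \Pi_{\bs v} \hl \bs w + \left( \Omega_{\bs u} - \Omega_{\bs v} \right) \hl \bs u + \Omega_{\bs v} \hl \bs w,
\]
the point being that every term on the right carries either a factor of $\bs w$ (via the differences $\Pi_{\bs u} - \Pi_{\bs v}$, $\Omega_{\bs u} - \Omega_{\bs v}$, which are linear in $\bs w = \bs u - \bs v$) or a factor $\hl \bs w = \mathcal{R} \cdot \nabla \bs w$. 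The natural quantity to differentiate is $\frac{1}{2}\|\bs w(t)\|_{L^2}^2$; testing the difference equation against $\bs w$ and integrating over $\R^n$, the leading term $\lambda \|\hlf \bs w\|_{L^2}^2$ (coming from $\hl \bs w$ after moving $\Pi_{\bs v}$ to the orthogonality side) should be good and must absorb everything else. Here I would use the commutator rewritings \eqref{eq:commutator_structure} and \eqref{eq:skew_structure} to expose the genuine structure: terms like $\bs u \cdot \hl \bs u$ are really $-[\mathcal{R},\bs u]\nabla \bs u$, and the Hamiltonian contributions split via the Kato--Ponce commutator, so that no term actually requires a full derivative of $\bs u$ or $\bs v$ in $L^\infty$.

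The main estimates would then be Hölder-type splittings of each right-hand side term into a product of (i) a low-regularity factor involving $\bs w$ measured in $L^2$ or $\dot H^{1/2}$, (ii) a $\BMO$ or $\dot H^{n/2}$ factor involving $\bs u$ or $\bs v$ which is bounded uniformly on $[0,T]$ by membership in $X_T$ and the Poincaré estimate $\|b\|_{\BMO}\lesssim\|b\|_{\dot H^{n/2}}$, and (iii) possibly a factor $\|\nabla\bs u\|_{\dot H^{(n-1)/2}}$ or $\|\nabla\bs v\|_{\dot H^{(n-1)/2}}$ which lies in $L^2_t$ by definition of $X_T$. Using \eqref{eq:CRW} for the projection commutators and \eqref{eq:KatoPonce} together with the fractional Gagliardo--Nirenberg inequalities of Appendix \ref{appendix} to interpolate the $\bs w$-factors between $L^2$ and $\dot H^{1/2}$, one should arrive at a differential inequality of the form
\[
\frac{d}{dt}\|\bs w(t)\|_{L^2}^2 + \lambda \|\hlf \bs w(t)\|_{L^2}^2 \lesssim \left( 1 + \|\nabla \bs u(t)\|_{\dot H^{\frac{n-1}{2}}}^2 + \|\nabla \bs v(t)\|_{\dot H^{\frac{n-1}{2}}}^2 \right) \|\bs w(t)\|_{L^2}^2,
\]
where the coefficient on the right is in $L^1(0,T)$. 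Since $\bs w(0)=0$, Gronwall's lemma forces $\bs w(t)\equiv 0$ on $[0,T]$.

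The hard part will be closing the estimates at the critical scaling without a dissipative term of higher order than $\|\hlf\bs w\|_{L^2}^2$: every factor of $\bs w$ must be controlled using only $L^2$ and the half-derivative, so the interpolation exponents in the fractional Gagliardo--Nirenberg inequalities have to balance exactly, and in dimensions $n=2,3$ this is delicate because $\bs w$ itself is only in $C^0_t L^2_x$ (not better) while $\nabla \bs u,\nabla\bs v$ sit in $L^2_t \dot H^{(n-1)/2}_x$. I expect the cleanest route is to treat $n=1$ first, where $\dot H^{1/2}$ is the energy space and the commutator theorems apply most transparently, and then to handle $n=2,3$ by the same scheme with the Gagliardo--Nirenberg exponents adjusted so that the $\bs w$-dependent factors always combine to $\|\bs w\|_{L^2}^{2-\theta}\|\hlf\bs w\|_{L^2}^\theta$ with $\theta<2$, allowing Young's inequality to move the $\hlf\bs w$ part to the left. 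One should also check that the time-boundary term and the lower-order $\hl\bs w$ pieces that are not absorbed by orthogonality are handled by the same product estimates; these are routine once the functional framework is fixed.
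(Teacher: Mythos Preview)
Your overall strategy --- test the difference equation with $\bs w$, control the right-hand side via the commutator estimates \eqref{eq:CRW}, \eqref{eq:KatoPonce} and the interpolation inequalities of Appendix~\ref{appendix}, then close with a Gronwall-type argument --- is the paper's strategy, and it works. There is, however, a concrete error in your treatment of the projection contribution. You assert that $\lambda\,\Pi_{\bs v}\hl\bs w$ disappears ``by orthogonality'' when tested against $\bs w$. That would require $\bs v\cdot\bs w=0$, which is false: since $|\bs u|=|\bs v|=1$ one has instead the sphere identity $\bs v\cdot\bs w=-\tfrac12|\bs w|^2$, so that
\[
\langle \Pi_{\bs v}\hl\bs w,\bs w\rangle_{L^2}
=\langle \hl\bs w,(\bs v\cdot\bs w)\bs v\rangle_{L^2}
=-\tfrac12\int (\bs v\cdot\hl\bs w)\,|\bs w|^2\,dx,
\]
which neither vanishes nor has a good sign, and carries a full derivative on $\bs w$. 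As written, your decomposition therefore does not deliver the clean dissipative term you need.

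The paper avoids this by decomposing the projection difference so that $\hl\bs w$ never appears in it: write
\[
\Pi_{\bs u}\hl\bs u-\Pi_{\bs v}\hl\bs v
=(\bs u\cdot\hl\bs u)\,\bs w+(\bs u\cdot\hl\bs u)\,\bs v-(\bs v\cdot\hl\bs v)\,\bs v,
\]
test against $\bs w$, and use $\bs v\cdot\bs w=-\tfrac12|\bs w|^2$ once more. The three contributions then combine to $\tfrac12 A|\bs w|^2$ with $A=\bs u\cdot\hl\bs u+\bs v\cdot\hl\bs v$, and by \eqref{eq:commutator_structure} and Sobolev embedding $\|A\|_{L^{2n}}\lesssim\|\hl\bs u\|_{L^{2n}}+\|\hl\bs v\|_{L^{2n}}\in L^2(0,T)$ --- precisely the $L^2_t$ coefficient your scheme (iii) was designed for. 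With this correction the rest of your plan is sound: the Hamiltonian piece $\langle\bs v\times\hl\bs w,\bs w\rangle$ is handled by \eqref{eq:KatoPonce} together with \eqref{eq:GN1} and \eqref{eq:GN2} as you indicate, and your pointwise Gronwall inequality is indeed attainable (the paper instead integrates in time first and absorbs via smallness of the time interval, followed by an open--closed continuation; both routes close).
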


\begin{proof}
The difference $\bs w=\bs u-\bs v$ satisfies 
\begin{equation} \label{eq:w}\begin{aligned}
\partial_{t}\bs w+\lambda\hl \bs w=&\bs w\times{\hl{\bs u}}+\bs v\times{\hl{\bs w}}+\lambda(\bs u \cdot \hl \bs u)\bs w\\
&-\lambda(\bs v \cdot \hl \bs v)\bs v+\lambda(\bs u \cdot \hl \bs u)\bs v.
\end{aligned}
\end{equation}
Taking into account $\bs u, \bs v \in \St$ we have $\bs w\cdot \bs v=-\dfrac{1}{2}|\bs w|^{2}$. The projection term
\[
A:=\bs u \cdot \hl {\bs u}+\bs v \cdot \hl {\bs v},
\]
satisfies the estimate
\begin{equation}\label{eq:AEstimate}
\|A\|_{L^{2n}} \lesssim \|\hl \bs u\|_{L^{2n}} + \|\hl \bs v\|_{L^{2n}}.
\end{equation}
Hence Sobolev embedding $\dot H^{\frac {n-1} {2}} (\R^n)\hookrightarrow L^{2n}(\R^n)$ implies that  $\|A\|_{L^{2n}} \in L^2(0,T)$.
Multiplying \eqref{eq:w} by $\bs w$ and integrating in space and time yields 
$$\dfrac{1}{2}\|\bs w(T)\|_{L^{2}}^{2}+\lambda\int_{0}^{T}\|\bs w\|_{\dot{H}^{\frac{1}{2}}}^{2} dt=\dfrac{\lambda}{2}\int_{0}^{T}\int_{\R^n} A  |\bs w|^{2}dxdt -\int_{0}^{T}\langle \bs w,\hl{\bs w}\times{\bs v}\rangle_{L^{2}}dt.$$
We fix $\rho>0$ and estimate the terms on the right separately. Using \eqref{eq:GN1} and \eqref{eq:AEstimate}, along with H\"older's and Young's inequalities, the projection term satisfies
\begin{eqnarray*}
\left| \int_{0}^{T}\int_{\mathbb{R}^n} A  |\bs w|^{2}dxdt \right| & \lesssim & \int_0^T \|A\|_{L^{2n}} \| \bs w \|_{L^{\frac{4n}{2n-1}}}^2 dt\\
& \lesssim & \left(\int_0^T \|A\|_{L^{2n}}^2 dt\right)^{\frac{1}{2}}\left(\sup_{t \in [0,T]} \|\bs w(t)\|_{L^{2}}^2 + \int_{0}^{T} \|\bs w\|_{\dot{H}^{\frac{1}{2}}}^{2}dt\right)\\
& \le &\rho \left(\sup_{t \in [0,T]} \|\bs w(t)\|_{L^{2}}^2 + \int_{0}^{T} \|\bs w\|_{\dot{H}^{\frac{1}{2}}}^{2}dt\right),
\end{eqnarray*}
if $T=T(\rho)$ is sufficiently small. To obtain a corresponding bound for the Hamiltonian term, we use \eqref{eq:KatoPonce},   \eqref{eq:GN2} and \eqref{eq:GN1}  as follows
\begin{eqnarray*}
 \left| \int_{0}^{T}\langle \bs w,\hl{\bs w}\times{\bs v}\rangle_{L^{2}}dt \right| & \leq&
 \int_{0}^{T} \left| \langle[\hlf,\Omega_{\bs v}]\bs w,\hlf{\bs w} \rangle_{L^{2}} \right| dt\\
&\lesssim&{{\int_{0}^{T}\|\hlf \bs v\|_{L^{4n}}}\|\bs w\|_{L^{\frac{4n}{2n-1}}}\|\bs w\|_{\dot{H}^{\frac{1}{2}}} dt}\\
&\lesssim&{{\int_{0}^{T}}\|\hlf \bs v\|_{\dot{H}^{\frac{n}{2}}}^{\frac{1}{2}}\| \hlf \bs v\|_{\dot{H}^{\frac{n-1}{2}}}^{\frac{1}{2}}\|\bs w\|_{L^{2}}^{\frac{1}{2}}\|\bs w\|_{\dot{H}^{\frac{1}{2}}}^{3/2}dt}\\
&\lesssim&\sup_{t \in [0,T]} \|\bs w(t)\|_{L^{2}}^{\frac{1}{2}} \int_{0}^{T}\|\bs v\|_{\dot{H}^{\frac{n}{2}}}^{\frac{1}{2}}\|\bs v\|_{\dot H^{\frac{n+1}{2}}}^{\frac{1}{2}}\|\bs w\|_{\dot{H}^{\frac{1}{2}}}^{3/2}dt.
\end{eqnarray*}
H\"older's inequality yields
\[
\begin{aligned}
\int_{0}^{T}\|\bs v\|_{\dot{H}^{\frac{n}{2}}}^{\frac{1}{2}} & \|\bs v\|_{\dot{H}^{\frac{n+1}{2}}}^{\frac{1}{2}} \|\bs w\|_{\dot{H}^{\frac{1}{2}}}^{3/2} dt \\
\le & \left( \sup_{t \in [0,T]} \|\bs v(t)\|_{\dot{H}^{\frac{n}{2}}}^2 \int_{0}^{T}\|\bs v\|_{\dot{H}^{\frac{n+1}{2}}}^{2} \, dt \right)^{1/4} \left(\int_{0}^{T} \|\bs w\|_{\dot{H}^{\frac{1}{2}}}^{2}dt \right)^{3/4},
\end{aligned}
\]
and Young's inequality implies
\[
\left| \int_{0}^{T}\langle \bs w,\hl{\bs w}\times{\bs v}\rangle_{L^{2}}dt \right|  \le C \rho \left( 
\sup_{t \in [0,T]} \|\bs w(t)\|_{L^{2}}^2 + \int_{0}^{T} \|\bs w\|_{\dot{H}^{\frac{1}{2}}}^{2}dt \right).
\]
Combining the preceding estimates we have
\[
\dfrac{1}{2}\|\bs w(T)\|_{L^{2}}^{2}+\lambda\int_{0}^{T}\|\bs w\|_{\dot{H}^{\frac{1}{2}}}^{2}dt
\leq C\rho\left(\sup_{t\in{[0,T]}} \|\bs w(t)\|_{L^{2}}^{2} + {\int_{0}^{T}}\|\bs w\|_{\dot{H}^{\frac{1}{2}}}^{2}dt \right)
\]
hence $\bs w\equiv 0$ for $C \rho <1$ and times $t\leq{T(\rho)}$. This implies that the maximal time interval such that $\bs w\equiv{0}$ is relatively open. Since by continuity the time interval is also closed, the claim follows.
\end{proof}

The proof may be suitably modified, in order to prove uniqueness of (weak) solutions $\bs u= \bs u^{(\eps)}$ of \eqref{eq:LLGR} 
for $\eps>0$ assuming 
\[
\bs u \in  C^0([0,T]; H^{\frac{n+\nu}{2}}_Q((\R^n;\St)) \quad \text{and} \quad \nabla \bs u \in L^2((0,T); H^{\frac {n+ \nu} {2}}(\R^n)).
\]
Letting $\alpha = \lambda / (1+\lambda^2)$ and $\beta = \alpha / \lambda$
an algebraically equivalent form of \eqref{eq:LLGR} is
\begin{equation} \label{eq:Gilbert}
\beta \partial_t \bs u =  \bs u \times \left[ \alpha \partial_t \bs u +\left( \eps \gl^\nu + \hl\right)  \bs u \right].
\end{equation}
In the half-harmonic case $\eps=0$, the formulations \eqref{eq:HLLG} and \eqref{eq:Gilbert} are analytically equivalent in the class \eqref{eq:class} of strong solutions.

\section{Estimates in the energy critical case $n=1$}

We first focus on the one dimensional case and derive uniform estimates for solutions of \eqref{eq:LLGR} with $\nu=1$, i.e., 
\begin{equation} \label{eq:LLGR1}
\del_t \bs u + \lambda   \left( \eps \gl  + \hl \right ) \bs u   = \left( \lambda \Pi_{\bs u} + \Omega_{\bs u} \right)\left( \eps \gl  + \hl \right) \bs u
\end{equation}
where
\[
\Pi_{\bs u} \gl \bs u = |\nabla \bs u|^2 \bs u.
\]
Approximating initial data $\bs u(0)=\bs u_0$ appropriately we can assume arbitrary 
Sobolev regularity of global solutions $\bs u=\bs u^{(\eps)}$ of \eqref{eq:LLGR} for $\eps>0$. We shall continue using the symbols $\nabla$ and $\Delta$
for the first and second order spatial derivative $\del_x$ and $\del_x^2$, respectively, and $\mathcal{H}$ for the Hilbert transform so that $\hl=\mathcal{H} \nabla$.

\begin{lemma}\label{lemma:H^1}
Suppose $\eps>0$ and $T>0$. Then with $\alpha=\lambda/(1+\lambda^2)$ 
\begin{equation}\label{LLGR:Identity}
E_\eps(\bs u(T))+ \alpha \int_0^T  \| \del_t \bs u\|_{L^2}^2 \, dt = E_\eps(\bs u_0).
\end{equation}
Moreover there exist $\delta>0$ such that for $\|\bs u_0 \|_{\dot H^{\frac{1}{2}}} \le \delta$
\begin{equation}\label{H1/2 Estimate}
\sup_{t \in [0,T]} \|\bs u(t)\|_{\dot{H}^{\frac{1}{2}}}^2 +   \lambda   \int_0^T \left( \| \bs u\|_{\dot{H}^1}^2 + \eps \| \bs u\|_{\dot{H}^{3/2}}^2 \right) \, dt \le  \|\bs u_0\|_{\dot{H}^{\frac{1}{2}}}^2
\end{equation}
and 
\begin{equation} \label{eq:L^2_estimate}
 \|\bs u(T)-Q\|_{H^{\frac{1}{2}}}^2 \le e^{c_0  T} \|\bs u_0-Q\|_{H^{{1/2}}}^2 
\end{equation}
where $c_0$ is a positive constant that only depends on $E_{\eps}(\bs u_0)$.
\end{lemma}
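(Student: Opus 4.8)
The plan is to prove the three assertions of Lemma \ref{lemma:H^1} in the order they are stated, exploiting the Gilbert form \eqref{eq:Gilbert} for the first identity, the commutator structures for the second, and a crude Gronwall argument for the third.

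\textit{Step 1: the energy identity \eqref{LLGR:Identity}.} I would test the Gilbert formulation \eqref{eq:Gilbert}, $\beta \del_t \bs u = \bs u \times [\alpha \del_t \bs u + (\eps \gl^\nu + \hl)\bs u]$, against $\del_t \bs u$ in $L^2$. Since $\bs u \times (\alpha \del_t \bs u)$ is pointwise orthogonal to $\del_t \bs u$, that term drops, leaving $\beta \|\del_t \bs u\|_{L^2}^2 = \langle \del_t \bs u, \bs u \times (\eps \gl^\nu + \hl)\bs u\rangle_{L^2}$. On the other hand, testing \eqref{eq:LLGR} (equivalently \eqref{eq:LLGR1}) against $(\eps \gl^\nu + \hl)\bs u$ and using that $\Pi_{\bs u}$ is the orthogonal projection onto the normal direction while $\Omega_{\bs u}$ is skew, one gets $\frac{d}{dt} E_\eps(\bs u) = -\langle \del_t \bs u, (\eps\gl^\nu+\hl)\bs u\rangle = -\lambda \|(\bs 1 - \Pi_{\bs u})(\eps\gl^\nu+\hl)\bs u\|_{L^2}^2$. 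Combining the two relations (the standard Landau--Lifshitz/Gilbert algebra, using $\alpha = \lambda/(1+\lambda^2)$, $\beta = \alpha/\lambda = 1/(1+\lambda^2)$) converts the dissipation $\lambda \|(\bs 1-\Pi_{\bs u})(\cdots)\bs u\|^2$ into $\alpha \|\del_t\bs u\|_{L^2}^2$, yielding $\frac{d}{dt}E_\eps(\bs u) = -\alpha\|\del_t \bs u\|_{L^2}^2$; integrating in time over $[0,T]$ gives \eqref{LLGR:Identity}. Here one uses the assumed higher Sobolev regularity of $\bs u^{(\eps)}$ to justify all integrations by parts and the differentiation under the integral.

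\textit{Step 2: the small-data $\dot H^{1/2}$ bound \eqref{H1/2 Estimate}.} I would differentiate the $\dot H^{1/2}$ part of the energy, i.e. test \eqref{eq:LLGR1} against $(\eps\gl + \hl)\bs u$ but isolate the $\hl$-contribution, obtaining
\[
\frac 1 2 \frac{d}{dt}\|\bs u\|_{\dot H^{1/2}}^2 + \lambda\big(\|\bs u\|_{\dot H^1}^2 + \eps\|\bs u\|_{\dot H^{3/2}}^2\big) = \lambda\langle \Pi_{\bs u}(\eps\gl+\hl)\bs u,(\eps\gl+\hl)\bs u\rangle + \langle \Omega_{\bs u}(\eps\gl+\hl)\bs u,(\eps\gl+\hl)\bs u\rangle - (\text{the } \eps\gl \text{ LHS piece}).
\]
The skew term $\langle \bs u \times (\cdots), (\cdots)\rangle$ vanishes pointwise. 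The projection term is $\lambda\langle \bs u\cdot(\eps\gl+\hl)\bs u, \bs u\cdot(\eps\gl+\hl)\bs u\rangle$... more carefully, one writes $\langle \del_t\bs u + \lambda\hl\bs u, \hl\bs u\rangle = \lambda\langle (\bs u\cdot\hl\bs u)\bs u + \ldots\rangle$ and, via the commutator identity \eqref{eq:commutator_structure}, $\bs u\cdot\hl\bs u = -[\mathcal H,\bs u]\nabla\bs u$, which by \eqref{eq:CRW} and the Poincaré-type bound $\|\bs u\|_{\BMO}\lesssim\|\bs u\|_{\dot H^{1/2}}$ is controlled in $L^2$ by $\|\bs u\|_{\dot H^{1/2}}\|\nabla\bs u\|_{L^2} = \|\bs u\|_{\dot H^{1/2}}\|\bs u\|_{\dot H^1}$. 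Feeding this in, the nonlinear right-hand side is $\lesssim \|\bs u\|_{\dot H^{1/2}}(\|\bs u\|_{\dot H^1}^2 + \eps^{1/2}\|\bs u\|_{\dot H^{3/2}}\cdot\ldots)$, i.e. bounded by $C\|\bs u\|_{\dot H^{1/2}}$ times the full dissipation $\lambda(\|\bs u\|_{\dot H^1}^2 + \eps\|\bs u\|_{\dot H^{3/2}}^2)$. So as long as $\|\bs u(t)\|_{\dot H^{1/2}} \le \delta$ with $C\delta < 1$ (say $C\delta \le 1/2$), the dissipation absorbs the right-hand side and $\frac{d}{dt}\|\bs u\|_{\dot H^{1/2}}^2 \le 0$; in particular $\|\bs u(t)\|_{\dot H^{1/2}} \le \|\bs u_0\|_{\dot H^{1/2}} \le \delta$ for all $t$, closing the continuity/bootstrap argument, and integrating gives \eqref{H1/2 Estimate}.

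\textit{Step 3: the exponential $H^{1/2}$ bound \eqref{eq:L^2_estimate}.} Here I would test \eqref{eq:LLGR1} against $\bs u - Q$ in $L^2$, combine with Step 2 for the $\dot H^{1/2}$ part, and treat the nonlinear terms crudely: the projection term contributes $\int (\bs u\cdot(\eps\gl+\hl)\bs u)(\bs u\cdot(\bs u - Q))$-type expressions, the Hamiltonian term $\int (\bs u\times(\eps\gl+\hl)\bs u)\cdot(\bs u - Q)$. Using Hölder, the fractional Gagliardo--Nirenberg inequalities of the Appendix, and the already-established uniform bound $E_\eps(\bs u(t)) \le E_\eps(\bs u_0)$ from Step 1 to control the $\dot H^{1/2}$ norm and (via \eqref{H1/2 Estimate} integrated) the $L^2_t$ integrals of the higher norms, each nonlinear term is bounded by $c_0(E_\eps(\bs u_0))(\|\bs u - Q\|_{L^2}^2 + \|\bs u\|_{\dot H^{1/2}}^2) + (\text{small})\cdot(\text{dissipation})$, so that $\frac{d}{dt}\|\bs u - Q\|_{H^{1/2}}^2 \le c_0 \|\bs u - Q\|_{H^{1/2}}^2$; Gronwall then yields \eqref{eq:L^2_estimate}. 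The main obstacle, I expect, is Step 2: getting the nonlinear terms (especially the $\eps\gl$ pieces interacting with $\Pi_{\bs u}$ and $\Omega_{\bs u}$, which are genuinely present here and absent from the formal $\eps = 0$ computation) to be bounded by a \emph{small} multiple of the full regularized dissipation $\lambda(\|\bs u\|_{\dot H^1}^2 + \eps\|\bs u\|_{\dot H^{3/2}}^2)$ with a constant independent of $\eps$ — this is exactly where the $\dot H^{1/2}$-smallness, the $\BMO$ Poincaré estimate, and the Coifman--Rochberg--Weiss commutator bound \eqref{eq:CRW} must all be used in concert, and where the appropriate choice of $\delta$ is forced.
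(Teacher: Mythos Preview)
Your Steps 1 and 3 are broadly in line with the paper's argument. The genuine gap is in Step 2, and it concerns precisely the point you flag as ``the main obstacle'' without resolving it.

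The correct multiplier (as in the paper) is $\hl\bs u$, which gives
\[
\frac{1}{2}\frac{d}{dt}\|\bs u\|_{\dot H^{1/2}}^2 + \lambda\big(\|\bs u\|_{\dot H^1}^2 + \eps\|\bs u\|_{\dot H^{3/2}}^2\big) = \lambda\langle\Pi_{\bs u}(\eps\gl+\hl)\bs u,\hl\bs u\rangle + \langle\bs u\times(\eps\gl+\hl)\bs u,\hl\bs u\rangle.
\]
Here only the piece $\langle\bs u\times\hl\bs u,\hl\bs u\rangle$ vanishes by skew-symmetry; the $\eps$-Hamiltonian term $\eps\langle\bs u\times\gl\bs u,\hl\bs u\rangle$ survives. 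Your displayed identity instead has both slots equal to $(\eps\gl+\hl)\bs u$, which would correspond to testing against the full operator; that does make the skew term vanish, but then the left-hand side becomes $\frac{d}{dt}E_\eps(\bs u)$ and the dissipation becomes $\lambda\|(\eps\gl+\hl)\bs u\|_{L^2}^2$, so you would only bound $\|\bs u(T)\|_{\dot H^{1/2}}^2$ by $2E_\eps(\bs u_0)$, not by $\|\bs u_0\|_{\dot H^{1/2}}^2$ as the lemma asserts. In particular the continuity argument closing smallness of $\|\bs u\|_{\dot H^{1/2}}$ would require smallness of $E_\eps(\bs u_0)$, which is not the hypothesis.

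The missing tool is the Kato--Ponce commutator estimate \eqref{eq:KatoPonce}, not just \eqref{eq:CRW}. The paper integrates by parts and uses skew-symmetry once more to write
\[
\eps\langle\bs u\times\gl\bs u,\hl\bs u\rangle = \eps\langle\hlf(\bs u\times\nabla\bs u),\hlf\nabla\bs u\rangle = \eps\langle[\hlf,\Omega_{\bs u}]\nabla\bs u,\hlf\nabla\bs u\rangle,
\]
and then \eqref{eq:KatoPonce} combined with interpolation yields the bound $\lesssim \eps\,\|\bs u\|_{\dot H^{1/2}}\|\bs u\|_{\dot H^{3/2}}^2$, which is absorbed into the dissipation. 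Your outline invokes only the Coifman--Rochberg--Weiss bound, which suffices for the projection terms but not for this Hamiltonian contribution. The same commutator \eqref{eq:KatoPonce} is also what the paper uses to control the Hamiltonian term $S(T)$ in Step 3.
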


\begin{proof}
From \eqref{eq:Gilbert}
the energy identity \eqref{LLGR:Identity} is obtained from the multiplier $\bs u \times \del_t \bs u$. 

To prove \eqref{H1/2 Estimate} we use the multiplier $\hl \bs u$ 
for \eqref{eq:LLGR1} taking into account \eqref{eq:commutator_structure}
\begin{align*}
\left[  \frac 1 2 \|\bs u\|_{\dot{H}^{\frac{1}{2}}}^2 \right]_0^T&+\lambda \int_0^T \left( \| \bs u\|_{\dot{H}^1}^2 + \eps  \|\bs u\|_{\dot{H}^{3/2}}^2 \right) dt\leq\lambda\int_0^T  \|  [\mathcal{H}, \bs u] \nabla \bs u \|_{L^2}^{2}dt\\
 &+\lambda \eps\int_0^T\|  [\mathcal{H}, \bs u] \nabla \bs u \|_{L^2} \| \nabla \bs u\|_{L^4}^2 dt 
+\eps\int_0^T\left|\scp{\bs u \times \gl \bs u}{ \hl \bs u}_{L^2}\right| dt.
\end{align*}
Using \eqref{eq:CRW} and \eqref{eq:GN2} the projection terms are bounded by
\begin{align*}
  \|  [\mathcal{H}, \bs u] \nabla \bs u \|_{L^2}^{2} &+\varepsilon \|  [\mathcal{H}, \bs u] \nabla \bs u \|_{L^2} \| \nabla \bs u\|_{L^4}^2  \\
&\lesssim  \|\bs u\|_{\rm BMO}^2  \| \nabla \bs u\|_{L^2}^2   + \eps    \|\bs u\|_{\dot{H}^{\frac{1}{2}}} \|\nabla \bs u \|_{L^2} \| \nabla \bs u\|_{L^4}^2  \\
&\lesssim  \|\bs u\|_{\dot{H}^{\frac{1}{2}}}^2  \| \nabla \bs u\|_{L^2}^2   + \eps    \|\bs u\|_{\dot{H}^{\frac{1}{2}}} \|\nabla \bs u \|_{L^2}^2 \| \bs u\|_{\dot{H}^{3/2}}  \\
& \lesssim \|\bs u\|_{\dot{H}^{\frac{1}{2}}}^2  \left(  \|  \bs u\|_{\dot{H}^1}^2  + \eps   \| \bs u\|_{\dot{H}^{3/2}}^2 \right).
\end{align*}
As for the Hamiltonian terms
\begin{eqnarray*}
\scp{\bs u \times \gl \bs u}{ \hl \bs u}_{L^2} &=& \scp{\bs u \times \nabla \bs u}{\nabla \hl \bs u}_{L^2} \\
&=& \scp{  \hlf(\bs u \times \nabla \bs u)}{ \hlf \nabla  \bs u}_{L^2} \\
&=& \scp{ [ \hlf, \Omega_{\bs u}] \nabla \bs u}{ \hlf \nabla  \bs u}_{L^2},
\end{eqnarray*}
using the skew-symmetry of the triple product.
By \eqref{eq:KatoPonce} and interpolation we obtain 
\begin{eqnarray}\label{estimate:Hamiltonian}
\left|\scp{\bs u \times \gl \bs u}{ \hl \bs u}_{L^2} \right| &\lesssim& \|   [ \hlf, \Omega_{\bs u}] \nabla \bs u \|_{L^2} \|\bs u\|_{\dot{H}^{3/2}} \\ 
&\lesssim& \|  \hlf \bs u  \| _{L^4} \| \nabla \bs u \|_{L^4} \|\bs u\|_{\dot{H}^{3/2}} \nonumber \\
&\lesssim& \|  \hlf \bs u  \| _{L^4} \| \bs u \|_{\dot{H}^1}^{\frac{1}{2}} \|\bs u\|_{\dot{H}^{3/2}}^{3/2} \nonumber \\
&\lesssim& \|  \bs u  \| _{\dot{H}^{\frac{1}{2}}}^{\frac{1}{2}} \| \bs u \|_{\dot{H}^1} \|\bs u\|_{\dot{H}^{3/2}}^{3/2}   \nonumber \\
&\lesssim& \|\bs u\|_{\dot{H}^{\frac{1}{2}}}  \|\bs u\|_{\dot{H}^{3/2}}^{2}.\nonumber 
\end{eqnarray}
Hence there exists a universal constant $c>0$ such that 
\begin{equation}
\left[  \frac 1 2 \|\bs u\|_{\dot{H}^{\frac{1}{2}}}^2 \right]_0^T+\left( \lambda - c \left( \sigma(T) + \lambda \sigma(T)^2 \right) \right) \int_0^T \left( \| \bs u\|_{\dot{H}^1}^2 + \eps  \|\bs u\|_{\dot{H}^{3/2}}^2 \right) dt \le 0
\end{equation}
with the continuous function
\[
\sigma(\tau)=\sup_{t \in [0,\tau]}  \|\bs u\|_{\dot{H}^{\frac{1}{2}}}.
\]
Letting $\delta$ be the positive root of the quadratic equation $\lambda = 2 c \left( \sigma + \lambda \sigma^2 \right)$ for $\sigma$, it follows from a continuity argument that $\sigma(T) \le \delta$, implying \eqref{H1/2 Estimate}. \\

Finally to prove \eqref{eq:L^2_estimate} we use the multiplier $\bs u-Q$ for \eqref{eq:LLGR1} to obtain
\[
 \frac{1}{2}  \|\bs u(T)-Q\|_{L^2}^2  + \lambda  \int_0^T (\eps \|\bs u\|_{\dot{H}^1}^2 + \|\bs u\|_{\dot{H}^{\frac{1}{2}}}^2) \, dt  =  \frac{1}{2}  \|\bs u_0-Q\|_{L^2}^2 + \lambda R(T)+S(T),
\]
where by taking into account that $\bs u \cdot (\bs u -Q) = \frac 1 2 |\bs u-Q|^2$
\[
R(T)= \frac{1}{2} \int_0^T \scp{\eps |\nabla \bs u|^2 - [\mathcal{H}, \bs u] \nabla \bs u}{
 |\bs u-Q|^2}_{L^2} dt
\]
and using $\scp{\bs u \times \Delta \bs u}{\bs u-Q}_{L^2}= 
-\scp{\bs u \times \nabla \bs u}{\nabla \bs u}_{L^2}=0$
\[
S(T)= -\int_0^T \scp{ (\bs u - Q) \times \hl (\bs u -Q)}{  \bs u}_{L^2} \, dt.
\]
Regarding $R(T)$ we use \eqref{eq:GN1} and Young's inequality to bound
\begin{eqnarray*}
R(T) &\lesssim& \int_0^T \left(  \eps \|\nabla \bs u\|_{L^4}^2 + \|\nabla \bs u\|_{L^2} \|\bs u\|_{\dot{H}^{\frac{1}{2}}}  \right) \|\bs u-Q\|_{L^4}^2 \, dt \\
&\lesssim& \int_0^T \left(  \eps \|\bs u\|_{\dot{H}^{1}}   \|\bs u\|_{\dot{H}^{3/2}} + \|\bs u\|_{\dot{H}^{1}}  \|\bs u\|_{\dot{H}^{\frac{1}{2}}} \right)  \|\bs u\|_{\dot{H}^{\frac{1}{2}}}   \|\bs u-Q\|_{L^2}dt \\
&\lesssim&  \sup_{t \in [0,T]} \|\bs u\|_{\dot{H}^{\frac{1}{2}}} \left( \eps \int_0^T  \|\bs u\|_{\dot H^{3/2}}^2 dt  +  \sup_{t\in[0,T]} E_\eps (\bs u)  \int_0^T  \|\bs u-Q\|_{L^2}^2 dt \right)\\
 &+&  \sup_{t \in [0,T]} \|\bs u\|_{\dot{H}^{\frac{1}{2}}}^2  \left(\int_0^T  \|\bs u\|_{\dot H^1}^2 dt  + \int_0^T  \|\bs u-Q\|_{L^2}^2 dt  \right)\, .
\end{eqnarray*}
Regarding $S(T)$ we use \eqref{eq:KatoPonce}, \eqref{eq:GN1} and Sobolev interpolation to bound
\begin{eqnarray*}
\left|\scp{ (\bs u - Q) \times \hl (\bs u -Q)}{  \bs u}_{L^2} \right| &\lesssim& \|   [ \hlf, \Omega_{\bs u}]   (\bs u-Q) \|_{L^2} \|\bs u\|_{\dot{H}^{\frac{1}{2}}} \\ 
&\lesssim& \|  \hlf \bs u  \| _{L^4} \|  \bs u -Q \|_{L^4} \|\bs u\|_{\dot{H}^{\frac{1}{2}}} \\
&\lesssim& \|  \hlf \bs u  \| _{L^4} \| \bs u -Q \|_{ L ^2}^{\frac{1}{2}} \|\bs u\|_{\dot{H}^{\frac{1}{2}}}^{3/2}\\
&\lesssim& \|  \bs u  \| _{\dot{H}^{\frac{1}{2}}}^{\frac{1}{2}}   \|  \bs u  \| _{\dot{H}^{1}}^{\frac{1}{2}}  \| \bs u -Q \|_{L^2}^{\frac{1}{2}} \|\bs u\|_{\dot{H}^{\frac{1}{2}}}^{3/2}\\
&\lesssim& \|  \bs u  \| _{\dot{H}^{1}}  \| \bs u -Q \|_{L^2} \|\bs u\|_{\dot{H}^{\frac{1}{2}}}. 
\end{eqnarray*}
Hence it follows from Young's inequality that
\[
 S(T) \lesssim  \sup_{t\in[0,T]} \| \bs u\|_{\dot H^{\frac{1}{2}}}  \left(   \int_0^T   \|\bs u\|_{\dot H^1}^2 dt +   \int_0^T  \|\bs u - Q\|_{L^{2}}^2  dt \right).
\]
The higher order terms on the right are compensated by adding \eqref{H1/2 Estimate}, and the claim follows from Gronwall's inequality.
\end{proof}

Higher order estimates follow by exploiting the commutator structure once more. 

\begin{lemma} \label{lemma:H^{3/2}}
There exists $\delta>0$, such that for $\|\bs u_0\|_{\dot{H}^{\frac{1}{2}}} \le \delta$ and $0< T< \infty$
\[
 \|\bs u(T)\|_{\dot{H}^{1}}^2  +  \lambda  \int_{0}^T \left( \eps \| \bs u\|_{\dot{H}^2}^2+ \|  \bs u\|_{\dot{H}^{3/2}}^2 \right) dt \le  \|\bs u_0\|_{\dot{H}^{1}}^2.
\]
\end{lemma}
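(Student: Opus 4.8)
The plan is to test the regularized equation \eqref{eq:LLGR1} with the multiplier $\gl \bs u = -\Delta \bs u$, which is the natural next step in the hierarchy after the multiplier $\hl \bs u$ used in Lemma \ref{lemma:H^1}. Pairing the left-hand side with $-\Delta \bs u$ produces the good terms $\tfrac12 \frac{d}{dt}\|\bs u\|_{\dot H^1}^2$ together with $\lambda(\eps \|\bs u\|_{\dot H^2}^2 + \|\bs u\|_{\dot H^{3/2}}^2)$, so after integrating in time over $[0,T]$ we must show that the contributions of the projection and Hamiltonian nonlinearities, paired against $-\Delta \bs u$, are absorbed by these good terms once $\|\bs u_0\|_{\dot H^{1/2}} \le \delta$ is small, using the already established bound \eqref{H1/2 Estimate} which gives smallness of $\sup_t \|\bs u\|_{\dot H^{1/2}}$ and $L^2_t$-control of $\|\bs u\|_{\dot H^1}$ and $\eps^{1/2}\|\bs u\|_{\dot H^{3/2}}$.

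For the projection term, using \eqref{eq:commutator_structure} one writes $\Pi_{\bs u}(\eps\gl + \hl)\bs u = \bs u\,(\bs u\cdot(\eps\gl+\hl)\bs u) = \bs u\,(\eps|\nabla\bs u|^2 - [\mathcal H,\bs u]\nabla\bs u)$, and after pairing with $-\Delta\bs u$ and integrating by parts, every resulting term should be estimated by moving one derivative onto the scalar factor, applying the Coifman--Rochberg--Weiss bound \eqref{eq:CRW}, the Poincar\'e bound $\|\bs u\|_{\BMO}\lesssim\|\bs u\|_{\dot H^{1/2}}$, and the fractional Gagliardo--Nirenberg inequalities \eqref{eq:GN1}--\eqref{eq:GN2}, so as to produce a factor $\|\bs u\|_{\dot H^{1/2}}$ (small) times the good higher-order terms $\|\bs u\|_{\dot H^{3/2}}^2 + \eps\|\bs u\|_{\dot H^2}^2$. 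For the Hamiltonian term, using \eqref{eq:skew_structure} one rewrites $\Omega_{\bs u}(\eps\gl+\hl)\bs u$ so that $\langle \Omega_{\bs u}(\eps\gl+\hl)\bs u, -\Delta\bs u\rangle$ becomes, after distributing fractional derivatives, a commutator pairing of the form $\langle[\hlf,\Omega_{\bs u}]\,(\text{fractional derivative of }\bs u),\ \hlf(\text{fractional derivative of }\bs u)\rangle$, exactly as in \eqref{estimate:Hamiltonian}, to which the Kato--Ponce commutator estimate \eqref{eq:KatoPonce} applies, again yielding $\|\bs u\|_{\dot H^{1/2}}$ times the good terms. Collecting everything, one obtains
\[
\left[\tfrac12\|\bs u\|_{\dot H^1}^2\right]_0^T + \bigl(\lambda - c(\sigma(T) + \lambda\sigma(T)^2)\bigr)\int_0^T\!\bigl(\|\bs u\|_{\dot H^{3/2}}^2 + \eps\|\bs u\|_{\dot H^2}^2\bigr)\,dt \le 0,
\]
with $\sigma(T) = \sup_{t\in[0,T]}\|\bs u\|_{\dot H^{1/2}} \le \delta$, and choosing $\delta$ so that $c(\delta + \lambda\delta^2) < \lambda/2$ (possibly shrinking the $\delta$ from Lemma \ref{lemma:H^1}) gives the claimed estimate immediately, with no Gronwall argument needed since the nonlinear terms here carry the small factor rather than a bounded-but-large one.

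The main obstacle I anticipate is the bookkeeping of derivative counts in the Hamiltonian term: unlike the projection term, the skew part does not gain from the orthogonality $\nabla\bs u\cdot\bs u = 0$, so one must carefully split $-\Delta = \hlf\hlf\cdot$ style and integrate by parts so that the top-order derivative $\dot H^2$ (or $\dot H^{3/2}$ in the $\eps = 0$ piece) lands symmetrically on both factors of the commutator pairing, ensuring the exponents in the subsequent Gagliardo--Nirenberg interpolation close with total weight producing exactly $\|\bs u\|_{\dot H^{1/2}}$ to the first power and the good norms squared — any imbalance would leave an uncontrolled $\dot H^1$-in-time factor that is not small. A secondary point to watch is that all estimates must be uniform in $\eps \in (0,1]$, which is why the $\eps$-terms must be paired so that their coefficients are genuinely $\eps \|\bs u\|_{\dot H^2}^2$ and not, say, $\|\bs u\|_{\dot H^2}^2$ with a bounded-in-$\eps$ prefactor; the regularization was designed precisely so this works, and the structure of \eqref{eq:LLGR1} with $\Pi_{\bs u}\gl\bs u = |\nabla\bs u|^2\bs u$ makes the $\eps$-dependence explicit and benign.
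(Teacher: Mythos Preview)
Your proposal is correct and follows essentially the same route as the paper: test \eqref{eq:LLGR1} with $\gl\bs u$, use $\bs u\cdot\gl\bs u=|\nabla\bs u|^2$ to reduce the projection contribution to $\|[\mathcal H,\bs u]\nabla\bs u\|_{L^2}\|\nabla\bs u\|_{L^4}^2 + \eps\|\nabla\bs u\|_{L^4}^4$, bound these via \eqref{eq:CRW} and interpolation by $\|\bs u\|_{\dot H^{1/2}}^2(\|\bs u\|_{\dot H^{3/2}}^2+\eps\|\bs u\|_{\dot H^2}^2)$, and handle the Hamiltonian term (noting $\langle\bs u\times\eps\gl\bs u,\gl\bs u\rangle=0$ so only the $\hl$ piece survives) by directly invoking the already-established estimate \eqref{estimate:Hamiltonian} from Lemma~\ref{lemma:H^1}, which gives $\|\bs u\|_{\dot H^{1/2}}\|\bs u\|_{\dot H^{3/2}}^2$. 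The resulting inequality and the choice of $\delta$ match the paper exactly.
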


\begin{proof}
Using the multiplier $\gl \bs u$ for \eqref{eq:LLGR1} we obtain
\begin{align*}
  \left[  \frac 1 2 \|\bs u\|_{\dot{H}^{1}}^2 \right]_0^T+ \lambda \int_0^T \|  \bs u\|_{\dot{H}^{3/2}}^2dt  & + \eps \int_0^T \|\bs u\|_{\dot{H}^{2}}^2dt \\
&\lesssim \lambda\int_0^T \|  [\mathcal{H}, \bs u] \nabla \bs u \|_{L^2} \| \nabla \bs u\|_{L^4}^2+\eps   \|  \nabla \bs u\|_{L^4}^4 dt\\
& +\int_0^T \|   [ \hlf, \Omega_{\bs u}] \nabla \bs u \|_{L^2} \|\bs u\|_{\dot{H}^{3/2}}dt.
\end{align*}
We claim that terms on the right can be absorbed. Indeed,  
for the $\lambda$ terms we obtain by using \eqref{eq:CRW} and \eqref{eq:GN1}
\begin{align*}
&\|  [\mathcal{H}, \bs u] \nabla \bs u \|_{L^2} \| \nabla \bs u\|_{L^4}^2
+\eps  \|  \nabla \bs u\|_{L^4}^4 \\
&\lesssim  \left( \|\bs u\|_{\rm BMO}  \| \nabla \bs u\|_{L^2} \right) \left( \| \nabla \bs u\|_{\dot H^{{1/2}}}  \| \nabla \bs  u\|_{ L^2}\right) + \eps   \|\nabla \bs u\|_{L^2}^{3}\|\Delta \bs u\|_{L^2}   \\
&\lesssim   \|\bs u\|_{\dot{H}^{\frac{1}{2}}}  \| \nabla \bs u\|_{L^2}^2 \| \bs u\|_{\dot{H}^{3/2}} +
\eps   \|\bs u\|_{\dot{H}^{\frac{1}{2}}}^2 \| \bs u\|_{\dot{H}^{2}}^2 \\
&\lesssim     \|\bs u\|_{\dot{H}^{\frac{1}{2}}}^2   \| \bs u\|_{\dot{H}^{3/2}}^2 +\eps   \|\bs u\|_{\dot{H}^{\frac{1}{2}}}^2 \| \bs u\|_{\dot{H}^{2}}^2 .
\end{align*}
Finally the Hamiltonian terms satisfy
 $$ \|   [ \hlf, \Omega_{\bs u}] \nabla \bs u \|_{L^2} \|\bs u\|_{\dot{H}^{3/2}}  \lesssim  \|\bs u\|_{\dot{H}^{\frac{1}{2}}} \| \bs u\|_{\dot{H}^{3/2}}^2,$$ via estimate \eqref{estimate:Hamiltonian}. 
The claim follows by taking into account the $\dot{H}^{\frac 1 2 }$ estimate in Lemma \ref{lemma:H^1}.
\end{proof}

\section{Estimates in the supercritical case $n=2,3$}

The supercritical case requires higher order extensions of Lemmata \ref{lemma:H^1} and \ref{lemma:H^{3/2}} under the relevant higher order regularization. 
We consider the cases $n=2,3$ and global regular solutions $\bs u = \bs u^{(\eps)}$ to \eqref{eq:LLGR} with $\nu=2$, i.e., 
\begin{equation}\label{LLGR2} 
\del_t \bs u + \lambda   \left( \eps \Delta^2  + \hl \right ) \bs u   = \left( \lambda \Pi_{\bs u} + \Omega_{\bs u} \right)\left( \eps \Delta^2  + \hl \right) \bs u
\end{equation}
with governing energy 
\[
E_{\eps}(\bs u) = \frac 1 2 \left( \eps \| \bs u\|_{\dot{H}^2}^2 + \|\bs u\|_{\dot{H}^{\frac{1}{2}}}^2\right).
\]

The biharmonic regularization introduces slight difficulties through the higher complexity of geometric projection terms
$\Pi_{\bs u}( \Delta^2 \bs u) = (\bs u \cdot \Delta^2 \bs u) \bs u$ where
\begin{eqnarray}\label{eq:constraint} 
 - \bs u\cdot \Delta^2 \bs u &=&   \Delta | \nabla  \bs u|^2 + |\Delta \bs u|^2 + 2\nabla \bs u\cdot\nabla\Delta \bs u\\
 &=& |\Delta \bs u|^2 + 2 |\nabla \nabla \bs u|^2+ 4\nabla \bs u\cdot\nabla\Delta \bs u.  \nonumber
\end{eqnarray}
On the other hand we are now having Sobolev inequalities \eqref{eq:Sob1}, \eqref{eq:Sob2} at our disposal, by which 
we can in particular bypass the Gronwall argument and obtain
an $L^2$ bound that is global in time.

\begin{lemma} \label{lemma:L^2_higher}
Suppose $\eps>0$ and $n=2,3$. Then the corresponding energy identity \eqref{LLGR:Identity} holds true. Moreover there exist $\delta>0$ and $c>0$ such that for $\|\bs u_0\|_{\dot H^{\frac{n}{2}}} \le \delta$ \begin{equation} \label{eq:L^2_estimate_2}
 \sup_{t >0} \|\bs u(t)-Q\|_{L^2}^2 \le c \|\bs u_0-Q\|_{L^2}^2.
\end{equation}
\end{lemma}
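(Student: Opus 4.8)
The energy identity \eqref{LLGR:Identity} is obtained exactly as in Lemma~\ref{lemma:H^1}, by rewriting \eqref{LLGR2} in Gilbert form \eqref{eq:Gilbert} (with $\nu=2$) and testing with the multiplier $\bs u\times\del_t\bs u$: the Hamiltonian part drops by skew-symmetry of the triple product, and what remains assembles into $\frac{d}{dt}E_\eps(\bs u)+\alpha\|\del_t\bs u\|_{L^2}^2=0$, which integrates to \eqref{LLGR:Identity}. In particular $E_\eps(\bs u(t))\le E_\eps(\bs u_0)$ for all $t$, so $\|\bs u(t)\|_{\dot H^{1/2}}$ and $\sqrt\eps\|\bs u(t)\|_{\dot H^2}$ stay bounded. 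I will also want the higher-order analogue of \eqref{H1/2 Estimate}, namely a uniform bound on $\sup_t\|\bs u(t)\|_{\dot H^{n/2}}^2$ together with $\lambda\int_0^\infty(\|\bs u\|_{\dot H^{(n+1)/2}}^2+\eps\|\bs u\|_{\dot H^{n/2+2}}^2)\,dt\le\|\bs u_0\|_{\dot H^{n/2}}^2$ under the smallness hypothesis $\|\bs u_0\|_{\dot H^{n/2}}\le\delta$; this is proved by the same scheme as Lemma~\ref{lemma:H^1} (multiplier $\gl^{n/2}\bs u$, commutator structures \eqref{eq:commutator_structure}, \eqref{eq:skew_structure}, the estimates \eqref{eq:CRW}, \eqref{eq:KatoPonce}, and fractional Gagliardo--Nirenberg), the point being that $\|\bs u\|_{\BMO}\lesssim\|\bs u\|_{\dot H^{n/2}}$ is scaling-critical so all nonlinear terms carry a small prefactor $\sigma(T)=\sup_{[0,T]}\|\bs u\|_{\dot H^{n/2}}$ and close by a continuity argument. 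I will treat this as available (it is the declared content of the subsequent higher-order lemmata), and concentrate on \eqref{eq:L^2_estimate_2}.

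For the $L^2$ bound I test \eqref{LLGR2} with $\bs u-Q$. As in Lemma~\ref{lemma:H^1}, the biharmonic projection term uses $\bs u\cdot(\bs u-Q)=\frac12|\bs u-Q|^2$ and \eqref{eq:constraint}, the dissipative and Hamiltonian projection/rotation terms are handled via \eqref{eq:CRW}, \eqref{eq:KatoPonce}, and the Hamiltonian principal part $\scp{\bs u\times\Delta^2\bs u}{\bs u-Q}$ is reorganized by moving derivatives so that, as before, the piece that would be quadratic in $\bs u$ alone vanishes by skew-symmetry and only terms genuinely involving $\bs u-Q$ survive. The outcome is an inequality of the shape
\begin{equation}\label{eq:L2plan}
\tfrac12\|\bs u(T)-Q\|_{L^2}^2+\lambda\int_0^T\bigl(\|\bs u\|_{\dot H^{1/2}}^2+\eps\|\bs u\|_{\dot H^2}^2\bigr)\,dt\le\tfrac12\|\bs u_0-Q\|_{L^2}^2+\Sigma(\bs u,T),
\end{equation}
where every term in $\Sigma$ carries at least one factor of $\sigma(T)$ (hence $\le\delta$) and, crucially, is controlled \emph{without} a bare $\int_0^T\|\bs u-Q\|_{L^2}^2\,dt$ that would force Gronwall. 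The mechanism for the last point is exactly what the text flags: instead of estimating $\|\bs u-Q\|_{L^4}$ by $\|\bs u-Q\|_{L^2}^{1/2}\|\bs u-Q\|_{\dot H^{1/2}}^{1/2}$ (which reintroduces $\|\bs u-Q\|_{L^2}$), use the Sobolev inequalities \eqref{eq:Sob1}, \eqref{eq:Sob2} to put $\|\bs u-Q\|_{L^{p}}\lesssim\|\bs u-Q\|_{\dot H^{1/2}}$ or $\|\bs u\|_{\dot H^{n/2}}$ directly, so that the right side of \eqref{eq:L2plan} is bounded by $\delta$ times $\int_0^T\|\bs u\|_{\dot H^{1/2}}^2\,dt+\eps\int_0^T\|\bs u\|_{\dot H^2}^2\,dt$ (plus, possibly, $\delta$ times the already-controlled higher-order dissipation integrals), with no $\|\bs u-Q\|_{L^2}$ on the right.

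Then for $\delta$ small enough the $\Sigma$-terms are absorbed into the left-hand side of \eqref{eq:L2plan}, giving
\[
\tfrac12\|\bs u(T)-Q\|_{L^2}^2+\tfrac{\lambda}{2}\int_0^T\bigl(\|\bs u\|_{\dot H^{1/2}}^2+\eps\|\bs u\|_{\dot H^2}^2\bigr)\,dt\le\tfrac12\|\bs u_0-Q\|_{L^2}^2,
\]
which is \eqref{eq:L^2_estimate_2} with $c=1$ and is uniform in $\eps$ and global in $T$. The main obstacle is precisely making the bookkeeping in $\Sigma$ work so that \emph{no} term needs $\int_0^T\|\bs u-Q\|_{L^2}^2\,dt$: the dangerous term is the low-order Hamiltonian contribution $\scp{(\bs u-Q)\times\hl(\bs u-Q)}{\bs u}$, where one must spend the two half-derivatives landing on $\bs u-Q$ as $\|\bs u-Q\|_{\dot H^{1/2}}^2$ (yielding a dissipation factor) rather than as $\|\bs u-Q\|_{L^2}$, using \eqref{eq:KatoPonce} to route the remaining $(-\Delta)^{1/4}$ onto a $\dot H^{n/2}$-small factor of $\bs u$; in dimensions $n=2,3$ this is exactly where \eqref{eq:Sob1}, \eqref{eq:Sob2} and the higher-order estimate on $\sup_t\|\bs u\|_{\dot H^{n/2}}$ are needed and where the $n=1$ argument (with its $e^{c_0T}$ loss) genuinely had to be replaced.
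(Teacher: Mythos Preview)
Your overall strategy matches the paper's: test \eqref{LLGR2} with $\bs u-Q$, split into projection and Hamiltonian remainders $R(T)$ and $S(T)$, invoke the higher-order $\dot H^{k/2}$ estimates (Proposition~\ref{proposition:H^{k/2}}) to control $\sigma(T)$, and use the Sobolev embeddings \eqref{eq:Sob1}, \eqref{eq:Sob2} in place of the one-dimensional Gagliardo--Nirenberg so that no time-integral of $\|\bs u-Q\|_{L^2}^2$ appears and Gronwall is avoided. The treatment of $S(T)$ you sketch is exactly the paper's.

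There is, however, a real bookkeeping gap in your handling of the $\eps$-biharmonic projection term. Expanding $\bs u\cdot\Delta^2\bs u$ via \eqref{eq:constraint} and integrating by parts produces in particular the contribution
\[
\eps\int_0^T\scp{|\Delta\bs u|^2}{|\bs u-Q|^2}_{L^2}\,dt.
\]
This term does \emph{not} carry a small factor in the form ``$\delta$ times an absorbable dissipation integral'': any H\"older split $\|\bs u-Q\|_{L^p}^2\|\Delta\bs u\|_{L^q}^2$ with $p<\infty$ places $\|\bs u-Q\|_{L^p}$ below the critical level $\dot H^{n/2}$, so the factor is not small, while $p=\infty$ (i.e.\ $\BMO$) is unavailable here. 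The paper estimates it by $\|\bs u\|_{\dot H^{1/2}}^2\,\eps\|\Delta\bs u\|_{\dot H^{(n-1)/2}}^2$ and then, crucially, invokes Proposition~\ref{proposition:H^{k/2}} for $k=1$ and $k=n-1$ to bound the time integral by $\|\bs u_0\|_{\dot H^{1/2}}^2\|\bs u_0\|_{\dot H^{(n-1)/2}}^2$, followed by interpolation of homogeneous norms to obtain $\lesssim\|\bs u_0-Q\|_{L^2}^2\,\|\bs u_0\|_{\dot H^{n/2}}^2\le\delta^2\|\bs u_0-Q\|_{L^2}^2$. Thus a term proportional to $\|\bs u_0-Q\|_{L^2}^2$ \emph{does} reappear on the right, contrary to your claim, and the conclusion is \eqref{eq:L^2_estimate_2} with some $c>1$ rather than $c=1$. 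Your parenthetical ``$\delta$ times the already-controlled higher-order dissipation integrals'' is not enough: those integrals are bounded by intermediate norms $\|\bs u_0\|_{\dot H^{(n-1)/2}}^2$ of the initial data, which are \emph{not} themselves $\lesssim\|\bs u_0-Q\|_{L^2}^2$ without the interpolation step. You need to make this interpolation explicit to close the estimate in the form stated.
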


To establish continuity near $t=0$, the Gronwall argument from Lemma \ref{lemma:H^1} may still be invoked to produce an estimate $\|\bs u(t)-Q\|_{H^{\frac n 2}}^2 \le e^{c_0  t} \|\bs u_0-Q\|_{\frac n 2}^2$
with $c_0>0$ only depending on $\|\bs u_0\|_{\dot H^{\frac n 2}}$. 

\begin{proof} The proof of the energy identity is identical to the one in Lemma \ref{lemma:H^1}. 
The proof of \eqref{eq:L^2_estimate_2} anticipates elements from Proposition \ref{proposition:H^{k/2}} below, in particular
to control the homogeneous $H^{\frac n 2}$ seminorm uniformly in time. 
Using the multiplier $\bs u-Q$ for \eqref{LLGR2} we obtain
\[
 \frac{1}{2}  \|\bs u(T)-Q\|_{L^2}^2  + 2 \lambda  \int_0^T  E_{\eps}(\bs u(t)) \, dt =  \frac{1}{2}  \|\bs u_0-Q\|_{L^2}^2 + \lambda  R(T)+  S(T),
\]
where
\[
 R(T)= \frac{1}{2} \int_0^T \scp{\eps (\Delta^2 \bs u\cdot\bs u) - [\mathcal{R}, \bs u] \nabla \bs u}{
 |\bs u-Q|^2}_{L^2} dt
\]
and
\[
S(T)=- \int_0^T \scp{ (\bs u - Q) \times \hl (\bs u -Q)}{  \bs u}_{L^2} \, dt,
\]
since $\scp{\bs u \times \Delta^2 \bs u}{\bs u-Q}_{L^2}= \scp{\bs u \times \Delta \bs u}{\Delta \bs u}_{L^2}+2\scp{\nabla \bs u \times \Delta \bs u}{\nabla \bs u}_{L^2}=0$. \\

Using \eqref{eq:CRW} and \eqref{eq:Sob1} we have
\begin{eqnarray*}
\left| \scp{[\mathcal{R}, \bs u] \nabla \bs u}{ |\bs u-Q|^2}_{L^2} \right| &\lesssim&  \|\bs u\|_{\BMO} \|\nabla \bs u\|_{L^{n}} \|\bs u-Q\|_{L^{\frac{2n}{n-1}}}^2   \, \\
&\lesssim& \|\bs u\|_{\dot{H}^{ \frac{n}{2}}}^2    \|\bs u\|_{\dot{H}^{ \frac{1}{2}}}^2.
\end{eqnarray*}
In view of \eqref{eq:constraint} and integration by parts we have
\begin{eqnarray*}
\scp{\Delta^2 \bs u\cdot \bs u}{\bs u\cdot(\bs u -Q)}_{L^2} &\leq & -2\scp{\Delta\nabla\bs u\cdot\nabla\bs u}{|\bs u - Q|^2}_{L^2}\\
&=& 2\scp{|\Delta\bs u|^2}{|\bs u - Q|^2}_{L^2} + 4 \scp{\Delta\bs u\cdot\nabla\bs u}{\nabla \bs u\cdot Q}_{L^2}.
\end{eqnarray*}
We estimate the terms on the right separately. First,
\[
\left| \scp{\Delta\bs u\cdot\nabla\bs u}{\nabla \bs u\cdot Q}_{L^2} \right| \lesssim  \left|\scp{|\Delta\bs u|}{|\nabla \bs u|^2}_{L^2}\right|   \\
\lesssim  \|\nabla\bs u\|_{L^4}^2\|\Delta \bs u\|_{L^2}
\lesssim  \|\bs u\|_{\dot H^{\frac{n}{2}}}\|\Delta \bs u\|_{L^2}^2,
\]
which follows from Gagliardo-Nirenberg \eqref{eq:GN} and Sobolev embedding.
In addition, \eqref{eq:Sob1} and \eqref{eq:Sob2} imply 
\[
\left| \scp{|\Delta\bs u|^2}{|\bs u - Q|^2}_{L^2}  \right| \lesssim  \|\bs u-Q\|_{L^{\frac{2n}{n-1}}}^2\|\Delta \bs u\|_{L^{2n}}^2
\lesssim  \|\bs u\|_{\dot H^{\frac{1}{2}}}^2\|\Delta \bs u\|_{\dot H^{\frac{n-1}{2}}}^2.
\]
Thus, integrating in time, we infer that
\begin{eqnarray*}
\eps \int_0^T\left| \scp{|\Delta\bs u|^2}{|\bs u - Q|^2}_{L^2}  \right| dt &\leq & \sup_{t\in[0,T]}\|\bs u\|_{\dot H^{\frac{1}{2}}}^2
\int_0^T \eps \|\Delta \bs u\|_{\dot H^{\frac{n-1}{2}}}^2 dt \\
&\lesssim& \|\bs u_0\|_{\dot H^{\frac{1}{2}}}^2\| \bs u_0\|_{\dot H^{\frac{n-1}{2}}}^2 
\lesssim \|\bs u_0 -Q\|_{L^2}^2\| \bs u_0\|_{\dot H^{\frac{n}{2}}}^2,
\end{eqnarray*}
after using \eqref{eq:H^k/2_estimate} for $k=1$ and for $k=n-1$ valid for $\delta$ sufficiently small, and interpolation of homogeneous Sobolev norms. Combining the preceding estimates  we have assuming $\delta \le 1$
$$ R(T) \lesssim  \delta \int_0^T E_{\eps}(\bs u) \, dt+ \delta^2 \|\bs u_0 -Q\|_{L^2}^2. $$
On the other hand, we use  \eqref{eq:Sob1}, \eqref{eq:Sob2} and \eqref{eq:KatoPonce} to bound
\begin{eqnarray*}
\left|\scp{ (\bs u - Q) \times \hl (\bs u -Q)}{  \bs u}_{L^2} \right| &\lesssim& \|   [\hlf, \Omega_{\bs u}]   (\bs u-Q) \|_{L^2} \|\bs u\|_{\dot{H}^{ \frac{1}{2}}} \\ 
&\lesssim& \| \hlf \bs u  \| _{L^{2n}} \|  \bs u -Q \|_{L^{\frac{2n}{n-1}}} \|\bs u\|_{\dot{H}^{ \frac{1}{2}}} \\
&\lesssim& \| \hlf \bs u  \| _{\dot H^{\frac{n-1}{2}}}  \|\bs u\|_{\dot{H}^{ \frac{1}{2}}}^2 \lesssim
\delta  \|\bs u\|_{\dot{H}^{ \frac{1}{2}}}^2.
\end{eqnarray*}
Combining all estimates, the claim follows with an appropriate choice of $\delta$.
\end{proof}

\begin{proposition} \label{proposition:H^{k/2}}
Suppose $\eps>0$, $n=2,3$, and let $k\in\N$ with $1\leq k \leq n+1$. Then there exists $\delta>0$ such that for $\|\bs u_0\|_{\dot H^{\frac{n}{2}}} \le \delta$ and $0 <  T< \infty$
\begin{equation} \label{eq:H^k/2_estimate}
  \|\bs u(T)\|_{\dot{H}^{ \frac k 2}}^2  +  \lambda\int_{0}^T \left( \eps \|\Delta \bs u\|_{\dot{H}^{\frac k 2}}^2+  \|  \bs u\|_{\dot H^ \frac{k+1}{2}}^2 \right) dt \le  \|\bs u_0\|_{\dot H^{\frac k 2}}^2.
\end{equation}
\end{proposition}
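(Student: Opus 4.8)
\textbf{Proof strategy for Proposition \ref{proposition:H^{k/2}}.}
The plan is to test \eqref{LLGR2} with the multiplier $\gl^{k/2} \bs u$ (interpreted via the homogeneous Sobolev pairing, so that the linear part produces exactly the dissipation $\lambda \|\bs u\|_{\dot H^{(k+1)/2}}^2 + \lambda \eps \|\Delta \bs u\|_{\dot H^{k/2}}^2$ on the left), proceeding by induction on $k$ from $k=1$ up to $k=n+1$. At each stage the previously established estimates \eqref{eq:H^k/2_estimate} for smaller indices, together with the $\dot H^{1/2}$ estimate \eqref{H1/2 Estimate} (which holds in all dimensions by the argument of Lemma \ref{lemma:H^1}, with $\nu=2$ in place of $\nu=1$) and the uniform-in-time bound on $\|\bs u\|_{\dot H^{n/2}}$ coming from the top of the induction, are available as small quantities of size $\lesssim \delta$. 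The right-hand side splits, as in Lemmata \ref{lemma:H^1}--\ref{lemma:H^{3/2}}, into \emph{projection terms} involving $[\mathcal R, \bs u]\nabla \bs u$ and $\eps |\nabla\bs u|^2 \bs u$ (and the higher biharmonic constraint \eqref{eq:constraint}), and \emph{Hamiltonian terms} of the skew-commutator type \eqref{eq:skew_structure}; the goal is to bound every such term by $c(\sigma(T)+\lambda\sigma(T)^2)\int_0^T(\|\bs u\|_{\dot H^{(k+1)/2}}^2 + \eps\|\Delta\bs u\|_{\dot H^{k/2}}^2)\,dt$ with $\sigma(T)=\sup_{[0,T]}\|\bs u\|_{\dot H^{n/2}}$, so that for $\delta$ small the dissipation absorbs them and a continuity argument closes the estimate exactly as at the end of the proof of Lemma \ref{lemma:H^1}.

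The projection terms are handled by the Coifman--Rochberg--Weiss estimate \eqref{eq:CRW}: one commutes $\gl^{k/2}$ past the pairing, reducing matters to $L^p$ bounds on products of derivatives of $\bs u$, then applies \eqref{eq:CRW} with $b=\bs u$, $\|\bs u\|_{\BMO}\lesssim\|\bs u\|_{\dot H^{n/2}}$, and distributes the remaining derivatives using the fractional Gagliardo--Nirenberg inequalities \eqref{eq:GN1}, \eqref{eq:GN2}, \eqref{eq:GN} and the Sobolev embeddings \eqref{eq:Sob1}, \eqref{eq:Sob2} from the appendix. The $\eps$-biharmonic projection contributions are expanded via \eqref{eq:constraint}: after integration by parts these become pairings of products like $\Delta\bs u\cdot\nabla\bs u$, $|\Delta\bs u|^2$, $\nabla\bs u\cdot\nabla\Delta\bs u$ against $\gl^{k/2}$-weighted factors of $\bs u$, and each is controlled by $\eps\|\bs u\|_{\dot H^{n/2}}^{\alpha}\|\Delta\bs u\|_{\dot H^{k/2}}^2$ for a suitable power $\alpha\ge 1$, hence absorbed. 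The Hamiltonian terms are rewritten, as in \eqref{estimate:Hamiltonian}, by moving a quarter-Laplacian onto the test function and invoking the tensorial Kato--Ponce commutator bound \eqref{eq:KatoPonce}; the resulting $\|\hlf\bs u\|_{L^{2n}}\lesssim\|\bs u\|_{\dot H^{n/2}}$ factor supplies the smallness, and the leftover norms are interpolated against the dissipative norm $\|\bs u\|_{\dot H^{(k+1)/2}}$.

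I expect the main obstacle to be a bookkeeping one: verifying that for every $k$ in the range $1\le k\le n+1$ and both $n=2,3$ the Gagliardo--Nirenberg/Sobolev exponents actually balance, i.e. that after pulling out one BMO (or one $\hlf\bs u$ in $L^{2n}$) factor of size $\delta$ the remaining homogeneous Sobolev orders sum to exactly $2\cdot\frac{k+1}{2}$ (respectively to the biharmonic dissipation order) with all intermediate Lebesgue exponents lying in the admissible range $(1,\infty)$ — this is where the restriction $k\le n+1$ and the dimensional hypothesis $n\le 3$ are consumed, and where the subcritical nature of the $\eps$-regularization (so that $\|\Delta\bs u\|_{\dot H^{k/2}}$ dominates the genuinely dangerous $\dot H^{(n+1)/2}$-type norms) is essential. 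A secondary subtlety is the circular-looking dependence between this Proposition and Lemma \ref{lemma:L^2_higher}: it is resolved by first establishing \eqref{eq:H^k/2_estimate} for $k=1,\dots,n$ purely by the continuity/absorption argument using only \eqref{H1/2 Estimate} and \eqref{eq:L^2_estimate_2}'s \emph{local-in-time} Gronwall surrogate, and only then feeding the case $k=n-1$ back into Lemma \ref{lemma:L^2_higher} to upgrade to the global $L^2$ bound, after which $k=n+1$ follows.
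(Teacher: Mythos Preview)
Your proposal is correct and follows the paper's approach: multiplier $(-\Delta)^{k/2}\bs u$, projection terms via \eqref{eq:CRW}, Hamiltonian terms via \eqref{eq:KatoPonce}, the Sobolev/Gagliardo--Nirenberg interpolation from the appendix, and closure by a continuity argument on $\sigma(T)=\sup_{[0,T]}\|\bs u\|_{\dot H^{n/2}}$. One clarification is in order, however: the paper does \emph{not} proceed by induction on $k$. It establishes the preliminary inequality \eqref{eq:k_prelim} for each $k\in\{2,3,4\}$ separately, with the right-hand side at every level bounded purely by $c\big((1+\lambda)\sigma(T)+\lambda\sigma(T)^2\big)$ times the level-$k$ dissipation --- no lower-$k$ quantities enter. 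The case $k=n$ is then self-closing (its left side is exactly what controls $\sigma(T)$), after which $\sigma(T)\le\delta$ and all other $k$ follow at once. A literal upward induction from $k=1$ would not close, since for $k<n$ the lower-order estimates do not control $\|\bs u\|_{\dot H^{n/2}}$; your final paragraph suggests you see this, but the ``induction'' framing earlier is misleading. Also, the Proposition's proof makes no use of Lemma~\ref{lemma:L^2_higher}, so there is no circularity to resolve on this side --- the forward reference runs only the other way.
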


\begin{proof}
We focus on the case $k>1$ as the corresponding arguments of Lemma \ref{lemma:H^1}  carry over to the higher dimensional case
under minor modifications.  In order to prove \eqref{eq:H^k/2_estimate} we shall show that for
$2 \le k \le 4$
\begin{equation} \label{eq:k_prelim}
 \left[ \dfrac{1}{2} \|\bs u\|_{\dot H^{\frac k 2}}^2 \right]_0^T + C(T)
  \int_0^T \left(\eps\|\Delta \bs u\|_{\dot H^{\frac k 2}}^2 +  \| \bs u\|_{\dot H^{\frac{k+1}{2}}}^2\right) dt
 \le 0
 \end{equation}
 where for a universal constant $c$
 \[
 C(\tau) = \lambda - c \left((1+\lambda) \sigma(\tau)+\lambda \sigma(\tau)^2 \right)  \quad \text{with} \quad \sigma(\tau) = \sup_{t \in [0,\tau]} \|\bs u\|_{\dot{H}^{\frac{n}{2}}}.
 \]
As in the proof of Lemma \ref{lemma:H^1} we choose $\delta$ to be the positive root of the quadratic equation 
$\lambda = 2c \left((1+\lambda) \sigma+\lambda \sigma^2\right)$ for $\sigma$. Then \eqref{eq:k_prelim} for $k=n \in \{2,3\}$
implies in particular that $\sigma(T) \le \delta$, thus $C(T)\ge \frac{\lambda}{2}$, and the claim follows immediately.

\subsection*{The case k=2}
Using the multiplier $\gl \bs u$ for \eqref{LLGR2}, we obtain 
\begin{eqnarray*}\label{H1estimate}
 \left[ \dfrac{1}{2} \|\bs u\|_{\dot H^1}^2 \right]_0^T + \lambda \int_0^T \left( \| \bs u\|_{\dot H^{\frac 3 2}}^2+\eps\|\Delta \bs u\|_{\dot H^1}^2 \right) dt
 =-\int_0^T\scp{\bs u\times\hl \bs u}{\Delta \bs u}_{L^2} dt
\\
- \int_0^T \lambda \scp{ [\mathcal R, \bs u]\nabla \bs u}{|\nabla \bs u|^2}_{L^2}   +   \eps \left( \lambda\scp{\bs u\cdot\Delta^2 \bs u}{\bs u\cdot\Delta \bs u}_{L^2}+\scp{\bs u\times \Delta^2 \bs u}{\Delta \bs u}_{L^2}\right)  dt.
\end{eqnarray*}
We estimate the terms occuring on the right separately. Let us first consider the projection terms. 
Using  \eqref{eq:CRW} and \eqref{eq:Sob1} we have
\begin{eqnarray}\label{eq:K2Estimate1}
 \left| \scp{ [\mathcal R, \bs u]\nabla \bs u}{|\nabla \bs u|^2}_{L^2}\right|  &\lesssim& \|\bs u\|_{\BMO}\|\nabla \bs u\|_{L^n}\|\nabla \bs u\|_{L^{\frac {2n} {n-1}} }^2 \nonumber \\
&\lesssim&  \|\bs u\|_{\dot H^{\frac{n}{2}}}^2\|\nabla \bs u\|_{\dot H^{\frac{1}{2}}}^2 .
\end{eqnarray}
Moreover
  \begin{equation}\label{eq:K2Estimate2}
  \left|    \scp{\bs u\cdot\Delta^2 \bs u}{\bs u\cdot\Delta \bs u}_{L^2} \right| =
   \left|    \scp{\bs u\cdot\Delta^2 \bs u}{|\nabla \bs u|^2}_{L^2} \right| \lesssim    \| \bs u\|_{\dot{H}^{\frac{n}{2}}}^{2}\|\nabla\Delta \bs u\|_{L^2}^{2}
  \end{equation}
follows after expanding the product according to \eqref{eq:constraint}   \begin{align*}
\||\nabla \bs u|^2\|_{\dot{H}^1}^2 &+ \left| \scp{ |\Delta \bs u|^2}{ |\nabla \bs u|^2}_{L^2} \right|  +\left |\scp{\nabla \bs u\cdot \Delta\nabla \bs u} {|\nabla \bs u|^2}_{L^2}\right | \nonumber \\
 & \lesssim   \|\nabla \bs u\|_{L^{2n}}^2 \|\Delta \bs u\|_{L^{ \frac {2n} {n-1}}}^2 + \|\nabla \bs u\|_{L^{6}}^3\|\nabla\Delta \bs u\|_{L^2} \nonumber \\
&\lesssim    \| \bs u\|_{\dot H^{\frac {n+1} {2}}}^2 \|\Delta \bs u\|_{\dot H^{\frac{1}{2}}}^2 + \|\nabla \bs u\|_{L^{n}}^2\|\nabla \Delta \bs u\|_{L^2}^2 \nonumber \\
&\lesssim   \| \bs u\|_{\dot H^{\frac{n}{2}}}^{\frac {2(5-n)}{6-n}}\|\nabla \Delta \bs u\|_{L^2}^{\frac {2}{6-n}}   \| \bs u\|_{\dot H^{\frac{n}{2}}}^{\frac {2}{6-n}} \|\nabla \Delta \bs u\|_{L^2}^{\frac {2(5-n)}{6-n}} + \| \bs u\|_{\dot{H}^{\frac{n}{2}}}^{2}\|\nabla \Delta \bs u\|_{L^2}^2  
 \end{align*}
using \eqref{eq:Sob1}, \eqref{eq:Sob2}, the Gagliardo-Nirenberg inequality and repeated interpolation.

For the Hamiltonian terms we invoke \eqref{eq:KatoPonce}, \eqref{eq:Sob1} and \eqref{eq:Sob2} in order to deduce
\begin{eqnarray}\label{eq:K2Estimate3}
\left|\scp{\bs u\times\hl \bs u}{\Delta \bs u}_{L^2}  \right|&\lesssim& \|[\hlf \bs u,\Omega_{\bs u}]\nabla \bs u\|_{L^2}\|\nabla \bs u\|_{\dot H^{\frac{1}{2}}} \\
&\lesssim& \|\nabla \bs u\|_{L^{\frac{2n}{n-1}}}\|\hlf \bs u\|_{L^{2n}}\|\nabla \bs u\|_{\dot H^{\frac{1}{2}}} \nonumber\\
&\lesssim& \|\bs u\|_{\dot H^{\frac{n}{2}}}\|\nabla \bs u\|_{\dot H^{\frac{1}{2}}}^2 \nonumber.
\end{eqnarray} 
For the terms in $\eps$, we integrate by parts and use \eqref{eq:Sob1}, \eqref{eq:Sob2} 
and interpolation to obtain 
\begin{eqnarray}\label{eq:K2Estimate4}
\left| \scp{\bs u\times \Delta^2 \bs u}{\Delta \bs u}_{L^2}  \right| &=& \left| \scp{ \nabla \bs u\times \Delta \bs u}{\nabla \Delta \bs u}_{L^2}  \right|\\
&\lesssim& \|\nabla \bs u\|_{L^{2n}} \|\Delta \bs u\|_{L^{ \frac {2n} {n-1}}} \|\nabla \Delta \bs u\|_{L^2} \nonumber \\
& \lesssim &    \| \bs u\|_{\dot H^{\frac {n+1} {2}}} \|\Delta \bs u\|_{\dot H^{\frac{1}{2}}}\|\nabla \Delta \bs u\|_{L^2}\nonumber \\
&\lesssim&    \| \bs u\|_{\dot{H}^{\frac{n}{2}}}\|\nabla\Delta \bs u\|_{L^2}^{2}\nonumber.
\end{eqnarray}

Summarizing \eqref{eq:K2Estimate1}, \eqref{eq:K2Estimate2}, \eqref{eq:K2Estimate3} and \eqref{eq:K2Estimate4} yields \eqref{eq:k_prelim}.

\subsection*{The case k=3}
Using the multiplier $\gl^{\frac{3}{2}} \bs u$ for \eqref{LLGR2} we obtain
\begin{align*}
 \left[ \dfrac{1}{2}  \| \bs u\|_{\dot H^{3/2}}^2 \right]_0^T &+\lambda \int_0^T (\| \bs u\|_{\dot H^2}^2+\eps\|\Delta \bs u\|_{\dot{H}^{3/2}}^2) dt \nonumber 
\\
&=-\lambda\left(\int_0^T   \scp{ [\mathcal R, \bs u]\nabla \bs u}{\bs u\cdot\Delta \hl \bs u}_{L^2}  dt-\eps \scp{\bs u\cdot\Delta^2 \bs u}{\bs u\cdot\Delta \hl \bs u}_{L^2} dt\right) \nonumber  \\
&+\int_0^T  \scp{\bs u\times\hl \bs u}{\Delta \hl \bs u}_{L^2}   dt+\eps\int_0^T   \scp{\bs u\times\Delta^2 \bs u}{\Delta \hl \bs u}_{L^2}   dt.
\end{align*}

We estimate the terms on the right separately. Distributing $\Delta$ we have
\begin{align*}
&  \left|\scp{ [\mathcal R, \bs u]\nabla \bs u}{\bs u\cdot\Delta \hl \bs u}_{L^2}\right| \\
& \lesssim  \left| \scp{[\mathcal R, \bs u]\nabla \bs u}{\nabla \bs u\cdot \nabla\hl \bs u}_{L^2}\right|+ \left|\scp{[\mathcal R, \bs u]\nabla \bs u} {\Delta \bs u\cdot \hl \bs u }_{L^2}\right| + \|\bs u\cdot\hl \bs u\|_{\dot{H}^1}^2.
\end{align*}
taking into account \eqref{eq:commutator_structure} for the last term. Using \eqref{eq:CRW} and
Gagliardo-Nirenberg, it can be estimated by re-introducing a commutator structure similar to \eqref{eq:commutator_structure}
\begin{eqnarray*} 
\|\bs u\cdot\hl \bs u\|_{\dot{H}^1}^2 &\lesssim&  \|\nabla \bs u\cdot \hl \bs u\|_{L^2}^2+\|\bs u\cdot\mathcal R \Delta \bs u\|_{L^{2}}^2\\
&\lesssim& \|\nabla \bs u\|_{L^4}^4+\|\mathcal{R}(\bs u\cdot\Delta \bs u)\|_{L^2}^2+\|[\mathcal R,\bs u]\Delta \bs u\|_{L^2}^2\\
&\lesssim& \|\nabla \bs u\|_{L^4}^4+ \|\bs u\|_{\BMO}^2\|\Delta \bs u\|_{L^2}^2\\
&\lesssim& \|\nabla \bs u\|_{L^n}^2\|\Delta \bs u\|_{L^2}^2.
\end{eqnarray*}
Using \eqref{eq:CRW} and Gagliardo-Nirenberg, the remaining terms can be estimated at once
\begin{align*}
\left| \scp{[\mathcal R,\bs u]\nabla \bs u}{\nabla \bs u\cdot  \nabla\hl \bs u}_{L^2}\right| + \left|\scp{[\mathcal R,\bs u]\nabla \bs u} {\Delta \bs u\cdot \hl \bs u }_{L^2}\right|  \\
\lesssim \|\bs u\|_{\BMO}\|\nabla \bs u\|_{L^4}^2 \|\Delta \bs u\|_{L^2} 
\lesssim   \|\bs u\|_{\dot H^{\frac{n}{2}}}^2\|\Delta \bs u\|_{L^2}^2.
\end{align*}
We obtain
\begin{equation}\label {eq:K3Estimate1}
\left|\scp{ [\mathcal R,\bs u]\nabla \bs u}{\bs u\cdot\Delta \hl \bs u}_{L^2}\right|\lesssim \|\bs u\|_{\dot H^{\frac{n}{2}}}^2\|\Delta \bs u\|_{L^2}^2.
\end{equation}
On the other hand, we claim that the projection terms in $\eps$ satisfy
\begin{equation}\label{eq:K3Estimate2}
\left|\scp{\bs u\cdot\Delta^2 \bs u}{\bs u\cdot\Delta \hl \bs u}_{L^2}\right|\lesssim  \| \bs u\|_{\dot H^{\frac{n}{2}}}\|\Delta \nabla\bs u\|_{\dot H^{\frac{1}{2}}}^2.
\end{equation}
Indeed, taking into account  \eqref{eq:constraint}, the estimate follows from
\begin{align*}\label{eq:K3Estimate2}
 \left|\scp{|\nabla \nabla \bs u|^2}{\bs u\cdot\Delta \hl \bs u}_{L^2}\right|  &+   \left | \scp{\nabla\bs u\cdot\Delta\nabla \bs u}{\bs u\cdot\Delta \hl \bs u}_{L^2}\right|  \nonumber \\ 
&\lesssim   \|\Delta \bs u\|_{L^\frac{4n}{n+1}}^2\|\Delta\nabla  \bs u\|_{L^\frac{2n}{n-1}}+ \|\nabla \bs u \|_{L^n}\|\Delta\nabla  \bs u\|_{L^\frac{2n}{n-1}}^2   \nonumber   \\
&\lesssim \|\nabla \bs u \|_{L^n}\|\Delta\nabla  \bs u\|_{\dot H^{\frac{1}{2}}}^2   \\
&\lesssim  \| \bs u\|_{\dot H^{\frac{n}{2}}}\|\Delta \nabla\bs u\|_{\dot H^{\frac{1}{2}}}^2 \nonumber
\end{align*}
using  \eqref{eq:GN}, \eqref{eq:Sob1} and the Calder\'on-Zygmund inequality.

Finally, we examine the Hamiltonian terms. Integration by parts  yields
\[
\left| \scp{(\bs u\times\hl \bs u)}{\Delta \hl \bs u}_{L^2}   \right| = \left|  \scp{\nabla \bs u\times\hl \bs u}{\nabla \hl \bs u}_{L^2}   \right|,
\]
which is bounded in terms of 
\[
\|\nabla \bs u\|_{L^4}^2\|\Delta \bs u\|_{L^2} \lesssim \|\nabla \bs u\|_{L^n}\|\Delta \bs u\|_{L^2}^2 \lesssim \| \bs u\|_{\dot H^{\frac{n}{2}}}\|\Delta \bs u\|_{L^2}^2.
\]
For the terms in $\eps$ we obtain by using  \eqref{eq:KatoPonce} and \eqref{eq:Sob1}
\begin{align}\label {eq:K3Estimate4}
& \left| \scp {\bs u\times\Delta^2 \bs u}{\Delta \hl \bs u}_{L^2} \right| \\
&\leq\left|  \scp{\hl (\bs u\times\Delta\nabla \bs u)}{\Delta\nabla \bs u}_{L^2}   \right|+ \left| \scp{(\nabla \bs u \times\Delta\nabla \bs u)}{\Delta\hl\bs u}_{L^2}  \right| \nonumber\\
& \lesssim \|[\hlf \bs u,\Omega_{\bs u}]\Delta\nabla \bs u\|_{L^2}\|\Delta\nabla \bs u\|_{\dot H^{\frac{1}{2}}}+\|\nabla \bs u\|_{L^n}\|\Delta\nabla \bs u\|_{L^\frac{2n}{n-1}}^2 \nonumber\\
& \lesssim \|\hlf \bs u\|_{L^{2n}}\|\Delta\nabla \bs u\|_{\dot H^{\frac{1}{2}}}\|\Delta\nabla \bs u\|_{L^{\frac{2n}{n-1}}}+ \|\bs u\|_{\dot H^{\frac{n}{2}}} \|\Delta\nabla \bs u\|_{\dot H^{\frac{1}{2}}}^2 \nonumber \\
& \lesssim \|\bs u\|_{\dot H^{\frac{n}{2}}} \|\Delta\nabla \bs u\|_{\dot H^{\frac{1}{2}}}^2 \nonumber.
\end{align}

Thus, after collecting  estimates  \eqref{eq:K3Estimate1},  \eqref{eq:K3Estimate2},  \eqref{eq:K3Estimate4} together, we deduce 
\eqref{eq:k_prelim}.

\subsection*{The case k=4}
Using the multiplier $\Delta^2 \bs u$ for \eqref{LLGR2}, we obtain
 \begin{align}\label{H2Estimates}
\left[\dfrac{1}{2}  \| \bs u\|_{\dot H^2}^2\right]_0^T &+ \lambda \int_0^T(\| \bs u\|_{\dot H^{5/2}}^2+\eps\|\Delta \bs u\|_{\dot H^2}^2)dt \nonumber \\
&=-\lambda \int_0^T \left(  \scp{[\mathcal R,\bs u]\nabla \bs u}{\bs u\cdot\Delta^2 \bs u}_{L^2}-\eps \| \bs u\cdot \Delta^2 \bs u \|_{L^2}^2 \right)dt \nonumber \\
&+\int_0^T  \scp{\bs u\times\hl \bs u}{\Delta^2 \bs u}_{L^2}  dt.
\end{align}
For the first term on the right, we deduce from \eqref{eq:constraint}
\[
 \left|\scp{[\mathcal R,\bs u]\nabla \bs u}{\bs u\cdot\Delta^2 \bs u}_{L^2}\right|  \lesssim  \left|\scp {| [\mathcal R,\bs u]\nabla \bs u|}{|\nabla \nabla  \bs u|^2 }_{L^2}\right|  +  \left|\scp {[\mathcal R,\bs u]\nabla \bs u} {(\Delta\nabla \bs u\cdot \nabla \bs u)}_{L^2}\right|,
 \]
where 
   \begin{eqnarray*}
 \left|\scp {|[\mathcal R,\bs u]\nabla \bs u|}{|\nabla \nabla \bs u|^2 }_{L^2}\right|  &\lesssim& \|\Delta \bs u\|_{L^{\frac{2n}{n-1}}}^2\|\bs u\|_{\BMO}\|\nabla \bs u\|_{L^n}\\
 &\lesssim& \|\Delta \bs u\|_{\dot H^{\frac{1}{2}}}^2\|\bs u\|_{\dot{H}^{\frac{n}{2}}}^2 \nonumber
 \end{eqnarray*} 
by \eqref{eq:CRW}, the Calder\'on-Zygmund inequality and \eqref{eq:Sob1}.
Moreover, integrating by parts and taking into account
$
\nabla [\mathcal{R}, \bs u] \nabla \bs u = [\mathcal{R}, \bs u] (\nabla \nabla \bs u)+ [\mathcal{R}, \nabla \bs u] \nabla \bs u
$
the remaining term is bounded in terms of the following three estimates. First,
 \begin{eqnarray*}
  \left|\scp {[\mathcal R,\bs u](\nabla \nabla \bs u)} {(\nabla \nabla \bs u\cdot \nabla \bs u)}_{L^2}\right|
&\lesssim& \|\bs u\|_{\BMO}\|\nabla \bs u\|_{L^n} \|\Delta \bs u\|_{L^{\frac{2n}{n-1}}}^2\\
&\lesssim& \|\bs u\|_{\dot{H}^{\frac{n}{2}}}^2  \|\Delta \bs u\|_{\dot H^{\frac{1}{2}}}^2
\end{eqnarray*}
using \eqref{eq:CRW}, the Calder\'on-Zygmund inequality and \eqref{eq:Sob1}. In addition,
\begin{eqnarray*}
  \left|\scp {[\mathcal R, \nabla \bs u] \nabla \bs u} {(\nabla \nabla \bs u\cdot \nabla \bs u)}_{L^2}\right|
&\lesssim& \|\nabla \bs u\|_{L^n}^3    \|\nabla \nabla  \bs u\|_{L^{\frac{2n}{n-1}}} \\
&\lesssim& \|\bs u\|_{\dot{H}^{\frac{n}{2}}}^2 \|\Delta \bs u\|_{\dot H^{\frac{1}{2}}}^2 
\end{eqnarray*}
follows by splitting up the commutator and using the Gagliardo-Nirenberg inequality
$$\|\nabla \bs u\|_{L^{\frac{6n}{n+1}}}^3\lesssim\|\nabla \bs u\|_{L^n}^2\|\Delta \bs u\|_{L^{\frac{2n}{n-1}}},$$
combined with \eqref{eq:Sob1}. Finally,
 \begin{eqnarray*}
   \left| \scp{[\mathcal R,\bs u]\nabla \bs u}{|\nabla \nabla \bs u|^2}_{L^2}\right| 
&\lesssim&  \|\bs u\|_{\BMO}\|\nabla \bs u\|_{L^n} \|\Delta \bs u\|_{L^{\frac{2n}{n-1}}}^2\\
&\lesssim&  \|\bs u\|_{\dot{H}^{\frac{n}{2}}}^2 \|\Delta \bs u\|_{\dot H^{\frac{1}{2}}}^2,
\end{eqnarray*}
via \eqref{eq:CRW} and \eqref{eq:Sob1}.
Hence we obtain
\begin{equation}\label{eq:K4Estimate1}
 \left|\scp{[\mathcal R,\bs u]\nabla \bs u}{\bs u\cdot\Delta^2 \bs u}_{L^2}\right| \lesssim \|\bs u\|_{\dot{H}^{\frac{n}{2}}}^2
 \|\Delta \bs u\|_{\dot H^{\frac{1}{2}}}^2.
\end{equation}
As far as terms in $\eps$ are concerned, the following estimate holds true
\begin{equation}\label {eq:K4Estimate2}
 \| \bs u\cdot \Delta^2 \bs u \|_{L^2}^2  \lesssim \|\bs u\|_{\dot{H}^{\frac{n}{2}}}^2\|\Delta^2 \bs u\|_{L^2}^2.
 \end{equation}
 Indeed, due to  \eqref{eq:constraint} and the Calder\'on-Zygmund inequality, it suffices to estimate
\begin{align*}
 \|\Delta \bs u\|_{L^4}^4 &+ \|\nabla \bs u\|_{L^4}^2  \|\nabla\Delta \bs u\|_{L^4}^2  \nonumber \\
 &  \lesssim    \|\Delta \bs u \|_{L^n}^2\|\nabla\Delta \bs u\|_{L^2}^2+\|\nabla \bs u\|_{L^n}  \|\Delta \bs u\|_{L^2} \|\nabla \Delta \bs u\|_{L^n}  \|\Delta^2 \bs u\|_{L^2}  \nonumber \\
  &  \lesssim    \|\nabla \bs u\|_{\dot{H}^{\frac{n}{2}}}^2\|\nabla\Delta \bs u\|_{L^2}^2+\|\bs u\|_{\dot H^{\frac{n}{2}}}  \|\Delta \bs u\|_{L^2} \| \Delta \bs u\|_{\dot H^{\frac{n}{2}}}  \|\Delta^2 \bs u\|_{L^2}  \nonumber \\
  &  \lesssim  \|  \bs u\|_{\dot H^{\frac{n}{2}}}^{\frac{12-2n}{8-n}} \|\Delta^2 \bs u\|_{L^2}^{\frac{4}{8-n}} \| \bs u\|_{\dot H^{\frac{n}{2}}}^{\frac{4}{8-n}} \|\Delta^2 \bs u\|_{L^2}^{\frac{12-2n}{8-n}}+ \|\bs u\|_{\dot{H}^{\frac{n}{2}}}^2\|\Delta^2 \bs u\|_{L^2}^2  \nonumber 
 \end{align*}
using the H\"older inequality, \eqref{eq:GN} and interpolation of homogeneous Sobolev norms.
Finally, we claim that the  Hamiltonian terms satisfy 
\begin{equation} \label{eq:K4Estimate3}
 \left| \scp{(\bs u\times\hl \bs u)}{\Delta^2 \bs u}_{L^2}\right|  \lesssim  \| \bs u\|_{\dot H^{\frac{n}{2}}}\|\Delta \bs u\|_{\dot H^{\frac{1}{2}}}^2.
 \end{equation}
Indeed, integrating by parts and using \eqref{eq:KatoPonce} and \eqref{eq:constraint} provides the estimate
\begin{align*}
&\left| \scp{\nabla \bs u\times\hl \nabla \bs u}{\Delta \bs u}_{L^2} \right|+\left| \scp{\bs u\times\hl \Delta \bs u}{\Delta \bs u}_{L^2}\right| \nonumber\\
& \lesssim  \|\nabla \bs u\|_{L^n}\|\Delta \bs u\|_{L^{\frac{2n}{n-1}}}^2 + \scp{ [\hlf \bs u,\Omega_{\bs u}]\hlf\Delta u}{ \Delta u}_{L^2}  \nonumber \\
& \lesssim  \| \bs u\|_{\dot H^{\frac{n}{2}}}\|\Delta \bs u\|_{\dot H^{\frac{1}{2}}}^2 +\|\hlf \bs u\|_{L^{2n}} \|\hlf\Delta \bs u\|_{L^2}\|\Delta \bs u\|_{L^{\frac{2n}{n-1}}}  \\
& \lesssim  \| \bs u\|_{\dot H^{\frac{n}{2}}}\|\Delta \bs u\|_{\dot H^{\frac{1}{2}}}^2 \nonumber.
\end{align*}
Summarizing \eqref{eq:K4Estimate1}, \eqref{eq:K4Estimate2} and \eqref{eq:K4Estimate3} implies \eqref{eq:k_prelim}.
\end{proof}

\section{Proof of the main result}
Given initial data $\bs u_0 \in H^{\frac{n}{2}}_Q(\R^n; \St)$ there exists, by virtue of a Schoen-Uhlenbeck argument as e.g. in \cite{Melcher} Lemma 4, an approximating family $\bs u^{(\eps)}_0 \in H^\infty_Q(\R^n;\St)$ such that 
\[
\bs u^{(\eps)}_0 - \bs u_0 \to 0 \quad  \text{in} \quad H^{\frac{n}{2}}(\R^n; \R^3) \quad \text{and} \quad E_\eps(\bs u^{(\eps)}_0) \to E_0(\bs u_0)=  \frac 1 2 \|\bs u_0\|_{\dot{H}^{\frac{1}{2}}}^2
\]
as $\eps \to 0$. By Lemma \ref{lemma:H^1} and Proposition \ref{proposition:H^{k/2}},  
the corresponding solutions $\bs u^{(\eps)}$ subconverge weakly to  an element 
\[
\bs u \in L^\infty((0,\infty); H^{\frac{n}{2}}_Q(\R^n;\St)) 
\quad \text{with} \quad
\nabla \bs u, \del_t \bs u \in  L^2((0,\infty);H^{\frac{n-1}{2}}(\R^n))
\] 
in the corresponding weak topology solving \eqref{eq:HLLG}. Indeed, the passing to the limit $\eps \to 0$ in \eqref{eq:LLGR} is customarily carried out on the basis of the Gilbert form \eqref{eq:Gilbert}, i.e., 
\[
\beta  \del_t \bs u^{(\eps)}  -\bs u^{(\eps)}\times   \left( \alpha  \del_t \bs u^{(\eps)} +   \hl \bs u^{(\eps)} \right)= - \eps \nabla \cdot \left( \bs u^{(\eps)}  \times \nabla \bs u^{(\eps)} \right) 
 \]
for $n=1$, and factorizing the bi-Laplacian in $\bs u \times \Delta^2 \bs u$ 
\begin{align*}
\beta  \del_t \bs u^{(\eps)}  -\bs u^{(\eps)}\times &  \left( \alpha  \del_t \bs u^{(\eps)} +   \hl \bs u^{(\eps)} \right)
\\ & =  \eps  \left[ \Delta \left( \bs u^{(\eps)}  \times \Delta \bs u^{(\eps)} \right) - 2 \nabla \cdot   \left( \nabla \bs u^{(\eps)}  \times \Delta  \bs u^{(\eps)}\right)    \right]
\end{align*} 
for $n=2,3$. In view of the energy inequality, the $\eps$ terms on the right are vanishing in the sense of distributions as $\eps \to 0$, while the terms on the left are weakly continuous on bounded sets by virtue of the Rellich-Kondrachov compactness theorem. In the case of heat flows to arbitrary spheres, the vector products are to be replaced by outer products, see e.g. \cite{Helein:book}, while the compactness arguments remain the same.

Initial data $\bs u_0$ is strongly attained in $H^{\frac{n}{2}}$ as $t \searrow 0$ as a consequence of weak attainment and convergence of norms
\[
\limsup_{t \searrow 0} \|\bs u(t)-Q\|_{H^{\frac{n}{2}}} \le  \|\bs u_0-Q\|_{H^{\frac{n}{2}}}
\]
following from \eqref{eq:L^2_estimate} in Lemma \ref{lemma:H^1} and its higher dimensional version (see remark after
Lemma \ref{lemma:L^2_higher}), respectively.

According to the regularity of $\nabla \bs u$ we also have 
$\bs u(t) \in H^{\frac{n+1}{2}}_Q$ for times $t>0$ which are arbitrarily small. Hence, by 
Proposition \ref{proposition:H^{k/2}} applied after translation in time and Proposition \ref{prop:uniqueness}, we have
\[
\bs u \in L^\infty_{\rm loc}((0, \infty); H^{\frac{n+1}{2}}_Q(\R^n)).
\]
Interpolating this with 
\[
\bs u \in H^1_{\rm loc}((0,\infty); L^2_Q(\R^n)) \subset C^{\frac{1}{2}}_{\rm loc}((0, \infty); L^2_Q(\R^n))
\]
from Lemma \ref{lemma:H^1}, we obtain
\[
\bs u \in C^{\frac{1-\theta}{2}}_{\rm loc}( (0,\infty); H^{\frac{\theta(n+1)}{2}}_Q(\R^n)) \quad \text{for every}  \quad 0<\theta< 1.
\]
For suitable choices of $\theta$ while using the Sobolev embedding theorem we have in particular
\[
\bs u\in C^{\frac{1}{2(n+1)}}_{\rm loc}( (0,\infty); H^{\frac{n}{2}}_Q(\R^n)) \cap C^{\frac{1}{n+2}}_{\rm loc}( \R^n \times (0,\infty)), 
\]
and from the continuity at $t=0$
\[
\bs u \in C^0([0, \infty); H^{\frac{n}{2}}_Q(\R^n)).
\]

Finally the long-time asymptotics in the supercritical case $n>1$ follows from Agmon's inequality
\[
\|\bs u(t)-Q\|_{L^\infty} \le 2 \|\bs u(t)-Q\|_{L^2}^{\frac{1}{n+1}}   \|\nabla \bs u(t) \|_{\dot H^{\frac{n-1}{2}}}^{\frac{n}{n+1}},
\]
taking into account Lemma \ref{lemma:L^2_higher} and the decay of $\| \nabla \bs u(t) \|_{\dot H^\frac{n-1}{2}}$ as $t \to \infty$.\\
The latter one follows by using multipliers $t \gl^{\frac{n+1}{2}} \bs u(t)$ rather than $\gl^{\frac{n+1}{2}} \bs u$ in the proof of the corresponding estimates in Lemma \ref{lemma:H^{3/2}} and Proposition \ref{proposition:H^{k/2}} yielding by the same line of arguments for $T_0$ positive
\[
\int_{T_0}^T t \del_t \| \nabla \bs u(t)\|_{\dot{H}^{\frac{n-1}{2}}}^2 \, dt \le 0 \quad \text{hence} \quad
\left[ \| \nabla \bs u\|_{\dot{H}^{\frac{n-1}{2}}}^2  \right]_{T_0}^T \le \dashint_{T_0}^T  \| \nabla \bs u(t)\|_{\dot{H}^{\frac{n-1}{2}}}^2 \, dt,
\] 
which in view of the time integrated bounds immediately implies the claim. 

\subsection*{Acknowledgements} 
This work is partially supported by Deutsche Forschungsgemeinschaft (DFG grant no. ME 2273/3-1). 

\setcounter{equation}{0}
\renewcommand\theequation{A.\arabic{equation}}
\setcounter{section}{0}

\appendix
 \section{}\label{appendix}
For the convenience of the reader, we collect some interpolation inequalities that are used frequently in this work. We begin with the standard interpolation inequality for $\dot H^s$ seminorms
\[
\|f\|_{\dot H^{s}}\lesssim \|f\|_{\dot H^{s_1}}^\theta \|f\|_{\dot H^{s_2}}^{(1-\theta)},
\]
valid for $s_1<s<s_2$, with $s=\theta s_1+(1-\theta)s_2$ which readily follows from its characterization in Fourier space. An endpoint version is Agmon's inequality
\[
\| f \|_{L^\infty}\lesssim \|f\|_{\dot H^{s_1}}^\theta \|f\|_{\dot H^{s_2}}^{(1-\theta)}, 
\]
 holding true for $0\leq s_1<\dfrac{n}{2}<s_2$, with $\dfrac{n}{2}=\theta s_1+(1-\theta)s_2$.

Moreover, we point out the following instance of the
Gagliardo-Nirenberg inequality
\begin{equation}\label{eq:GN}
 \|\nabla^{l+1} f\|_{L^p}\lesssim \|\nabla f \|_{L^n}^{1-\theta}\|\nabla^{m+1} f\|_{L^q}^{\theta},  
 \end{equation}
where
$\dfrac{1}{p}=\dfrac{l}{n} + \theta\left(\dfrac{1}{q} - \dfrac{m}{n}  \right) + \dfrac{(1-\theta)}{n}$ and $\dfrac{l}{m}\leq \theta\leq 1$. \\
The previous relation proves particularly useful in our context, in view of the Sobolev inequality 
\[
\|\nabla f\|_{L^n} \lesssim \| f\|_{\dot{H}^{ \frac{n}{2}}},
\]
which  we repeatedly use  along with
  \begin{equation} \label{eq:Sob1}
\| f\|_{L^{\frac{2n}{n-1}}}  \lesssim \|f\|_{\dot H^{\frac{1}{2}}}
\end{equation}
and 
\begin{equation} \label{eq:Sob2}
\|f\|_{L^{2n}} \lesssim  \|f\|_{\dot H^{\frac{n-1}{2}}}
\end{equation}
for appropriate functions $f:\R^n \to \R$. In turn, these estimates are frequently used in conjunction with the following Gagliardo-Nirenberg type inequality
\begin{equation} \label{eq:GN2}
\|f\|_{L^{4n}}^2 \lesssim \| f \|_{L^{2n}}  \|\nabla f\|_{L^n}.
\end{equation}

Finally,  a fractional Gagliardo-Nirenberg estimate is needed, for which we present a simple proof.
\begin{lemma}\label{lemma:interpolation}
For $f\in H^{\frac{1}{2}}(\R^n)$ the following estimate holds true
\begin{equation}\label{eq:GN1}
\|f\|_{L^{\frac{4n}{2n-1}}}^4 \lesssim \| f \|_{L^2}^2 \|f\|_{\dot{H}^{\frac{1}{2}}}^2.
\end{equation}
\end{lemma}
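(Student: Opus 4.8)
The plan is to prove the fractional Gagliardo--Nirenberg estimate \eqref{eq:GN1} by interpolation between the Sobolev inequality $\|f\|_{L^{\frac{2n}{n-1}}} \lesssim \|f\|_{\dot H^{1/2}}$ (which is \eqref{eq:Sob1}) and the trivial bound $\|f\|_{L^2} = \|f\|_{L^2}$. First I would observe that the target exponent $p = \frac{4n}{2n-1}$ lies strictly between $2$ and $\frac{2n}{n-1}$: indeed, writing $\frac 1 p = \frac{2n-1}{4n} = \frac 1 2 \cdot \frac 1 2 + \frac 1 2 \cdot \frac{n-1}{2n}$, so that $\frac 1 p = \frac{\theta}{2} + \frac{1-\theta}{2n/(n-1)}$ with $\theta = \frac 1 2$. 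This is exactly the condition under which the standard $L^p$ interpolation inequality (H\"older between the two Lebesgue spaces) gives
\[
\|f\|_{L^{p}} \le \|f\|_{L^2}^{\theta} \, \|f\|_{L^{\frac{2n}{n-1}}}^{1-\theta} = \|f\|_{L^2}^{1/2} \, \|f\|_{L^{\frac{2n}{n-1}}}^{1/2}.
\]

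Next I would simply insert \eqref{eq:Sob1} into the second factor to get $\|f\|_{L^p} \lesssim \|f\|_{L^2}^{1/2} \|f\|_{\dot H^{1/2}}^{1/2}$, and then raise both sides to the fourth power to obtain $\|f\|_{L^{\frac{4n}{2n-1}}}^4 \lesssim \|f\|_{L^2}^2 \|f\|_{\dot H^{1/2}}^2$, which is \eqref{eq:GN1}. The only genuine ingredient beyond elementary H\"older interpolation is the fractional Sobolev embedding \eqref{eq:Sob1}, namely $\dot H^{1/2}(\R^n) \hookrightarrow L^{\frac{2n}{n-1}}(\R^n)$; this is the Sobolev inequality for the fractional exponent $s = 1/2$, $\frac{1}{q} = \frac 1 2 - \frac{s}{n} = \frac 1 2 - \frac{1}{2n} = \frac{n-1}{2n}$, and is already recorded in the appendix, so it may be quoted directly.

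I do not expect any serious obstacle here: the statement is a soft consequence of two facts already available in the paper. The one point that requires a moment's care is verifying that $p = \frac{4n}{2n-1}$ really does satisfy $2 < p < \frac{2n}{n-1}$ for all $n \ge 1$ (equivalently that the interpolation parameter $\theta = 1/2$ lands in $(0,1)$, which it visibly does), and that the exponent arithmetic in the H\"older step is consistent; for $n=1$ one has $\frac{2n}{n-1} = \infty$ and the embedding $\dot H^{1/2}(\R) \hookrightarrow \mathrm{BMO}$ together with an $L^p$-interpolation with $L^2$ still yields the claim, but since the excerpt already states \eqref{eq:Sob1} in the form quoted, I would just use it verbatim and note that in the endpoint case $n=1$ the inequality $\|f\|_{L^4}^4 \lesssim \|f\|_{L^2}^2 \|f\|_{\dot H^{1/2}}^2$ follows from the same interpolation using the known $\dot H^{1/2}(\R)\hookrightarrow L^q(\R)$ bounds for all finite $q$. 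Thus the proof is a two-line interpolation argument, and the ``hard part'' is essentially bookkeeping of exponents.
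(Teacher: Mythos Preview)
For $n \ge 2$ your argument is correct and in fact more economical than the paper's. The paper proceeds on the Fourier side: it applies Hausdorff--Young, splits $\hat f$ into the regions $|\xi|\le 1$ and $|\xi|>1$, bounds the two pieces by $\|f\|_{L^2}^4$ and $\|f\|_{\dot H^{1/2}}^4$ respectively, and then optimizes over dilations $f\mapsto f(\lambda\,\cdot)$ to convert the resulting additive bound into the product \eqref{eq:GN1}. Your route---H\"older interpolation between $L^2$ and $L^{2n/(n-1)}$ with $\theta=\tfrac12$, followed by the Sobolev inequality \eqref{eq:Sob1}---is a two-line replacement that avoids Fourier analysis and the scaling optimization altogether. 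What the paper's approach buys is uniformity in $n$: it works verbatim for $n=1$.

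That brings me to the gap. Your treatment of $n=1$ is incorrect: the embedding $\dot H^{1/2}(\R)\hookrightarrow L^q(\R)$ you invoke for finite $q$ is false. The seminorm $\|f\|_{\dot H^{1/2}(\R)}$ is invariant under $f\mapsto f(\lambda\,\cdot)$, whereas $\|f(\lambda\,\cdot)\|_{L^q}=\lambda^{-1/q}\|f\|_{L^q}$, so scaling rules out any such inequality for $q<\infty$. The correct endpoint is $\dot H^{1/2}(\R)\hookrightarrow\mathrm{BMO}$, which you mention but then abandon. To salvage your approach for $n=1$ you would need the interpolation inequality $\|f\|_{L^4}\lesssim \|f\|_{L^2}^{1/2}\|f\|_{\mathrm{BMO}}^{1/2}$ (a standard consequence of John--Nirenberg, or of real interpolation between $L^2$ and $\mathrm{BMO}$), combined with $\|f\|_{\mathrm{BMO}}\lesssim\|f\|_{\dot H^{1/2}}$. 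This is not hard, but it is a genuinely different ingredient than what you wrote, and since the paper uses \eqref{eq:GN1} in the one-dimensional uniqueness argument and throughout Section~3, the case $n=1$ cannot be swept aside.
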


\begin{proof}
Applying the Hausdorff-Young inequality, we have
\[
\|f\|_{L^{\frac{4n}{2n-1}}}^4 \lesssim \|\hat{f}\|_{L^{\frac{4n}{2n+1}}}^4 
\lesssim \left( \int_{|\xi|\le 1} |\hat{f}(\xi)|^{{\frac{4n}{2n+1}}} d \xi \right)^\frac{2n+1}{n} + \left( \int_{|\xi|\ge 1} |\hat{f}(\xi)|^{{\frac{4n}{2n+1}}}d \xi \right)^\frac{2n+1}{n}.
\]
The first integral can be estimated by virtue of Jensen's inequality and Parseval's identity by $\|f\|_{L^2}^4$. As for the second integral we use H\"older's inequality
\[
 \int_{|\xi|\ge 1} |\hat{f}(\xi)|^\frac{4n}{2n+1} d \xi \le \left( \int_{|\xi|\ge 1} |\xi| |\hat{f}(\xi)|^{2} d \xi \right)^\frac{2n}{2n+1}
  \left( \int_{|\xi|\ge 1} \frac{d \xi}{|\xi|^{2n}} \right)^\frac{1}{2n+1}. 
\]
It follows that $\|f\|_{L^\frac{4n}{2n-1}}^4 \lesssim \| f \|_{L^2}^4+ \|f\|_{\dot{H}^{\frac{1}{2}}}^4$ and by scaling 
\[
 \lambda^{n/4}  \|f\|_{L^4}^4 \lesssim \lambda^{\frac{n}{2}} \| f \|_{L^2}^4+ \lambda^{\frac{n-1}{2}} \|f\|_{\dot{H}^{\frac{1}{2}}}^4
 \quad \text{for every} \quad  \lambda >0.
 \]
The claim then follows after optimising in $\lambda$.
 \end{proof}
\bibliography{MelSak_CPDE}
\bibliographystyle{acm}
\end{document}